\documentclass[11pt,reqno]{amsart}

\usepackage[
  letterpaper,
  left=1.0in,
  right=1.0in,
  top=1.0in,
  bottom=1.0in
]{geometry}

\usepackage[T1]{fontenc}
\usepackage{lmodern}
\usepackage{amsmath,amssymb}
\usepackage{microtype}
\usepackage[hidelinks]{hyperref}
\hypersetup{
  pdftitle={Atiyah's Minkowski Space Conjecture Fails for Every n >= 3},
  pdfauthor={Ziran Liu},
  pdfsubject={Counterexamples to Atiyah's Minkowski space conjecture for every n >= 3},
  pdfkeywords={Atiyah configuration of points, Minkowski space, inertial worldline, binary form, determinant, counterexample}
}

\allowdisplaybreaks[2]

\newtheorem{theorem}{Theorem}[section]
\newtheorem{proposition}[theorem]{Proposition}
\newtheorem{lemma}[theorem]{Lemma}
\newtheorem{corollary}[theorem]{Corollary}
\newtheorem{conjecture}[theorem]{Conjecture}
\theoremstyle{remark}
\newtheorem{remark}[theorem]{Remark}

\newcommand{\M}{\mathbb M}
\newcommand{\C}{\mathbb C}
\newcommand{\R}{\mathbb R}
\newcommand{\PP}{\mathbb P}
\newcommand{\ip}[2]{\langle #1,#2\rangle}

\title[Minkowski Space Conjecture]{Atiyah's Minkowski Space Conjecture Fails for Every $n\ge3$}
\author[Z. Liu]{Ziran Liu}
\address{Shanghai Institute for Mathematics and Interdisciplinary Sciences (SIMIS), Shanghai 200433, China}
\address{
Research Institute of Intelligent Complex Systems, Fudan University, Shanghai 200433, China}
\email{zliu@simis.cn}
\date{}
\keywords{Atiyah's configuration of points, Minkowski space conjecture, celestial sphere, inertial worldline, binary form, counterexample, normalized determinant}
\subjclass[2020]{Primary 51P05; Secondary 83A05, 15A03, 15A15}

\begin{document}
\raggedbottom

\begin{abstract}
Atiyah's Minkowski-space version of the configuration-of-points construction assigns to an admissible marked configuration of $n$ worldlines a collection of $n$ binary forms of degree $n-1$, whose roots
are the ordered retarded celestial directions. He conjectured that these forms are always linearly independent.  

We disprove this conjecture for every $n\ge3$.
For $n=3$, an explicit planar one-parameter family yields a real coefficient determinant with exactly one simple zero in a specified interval.  At this parameter, all six ordered celestial roots are distinct and the coefficient matrix has rank exactly two.  A null-translation construction then multiplies the first three forms by a common factor and produces counterexamples for every $n>3$.  Consequently, within the class of complete pairwise disjoint timelike affine lines, universal independence holds at $n=2$ and fails for every $n\ge3$; for each of the counterexamples, the normalized Atiyah--Sutcliffe determinant is defined and vanishes.
\end{abstract}

\maketitle

\section{Introduction}\label{sec:intro}

\subsection{Atiyah's Minkowski-space conjecture}

The configuration-of-points problem rests on a precise dimension match.  Let
\[
 W_n=\operatorname{Sym}^{n-1}\bigl((\C^2)^*\bigr),
 \qquad \dim_{\C}W_n=n.
\]
For each label $i$, Atiyah's construction multiplies the $n-1$ linear
factors determined by the ordered directions from $i$ to the remaining
labels and thereby produces a binary form $\beta_i\in W_n$.  Thus one
obtains exactly $n$ vectors in an $n$-dimensional vector space.  Their linear
independence is the nondegeneracy condition that turns the projective classes
$[\beta_1],\ldots,[\beta_n]$ into an ordered projective frame.  After fixing a Hermitian metric, one may choose unit representatives and
place them in the columns of an invertible matrix.  The unitary polar factor
of that matrix determines an ordered orthogonal frame, hence a complete
flag.  Changing the unit representatives only changes the phases of the
columns and therefore leaves the corresponding point of $U(n)/T^n$
unchanged.  Berry and Robbins's transported-spin construction motivated the
configuration-to-flag problem \cite{BerryRobbins}.  Atiyah then formulated
the associated equivariant flag-map problem, gave an explicit solution of
its bare existence version, and proposed the binary-form ansatz used here
\cite{AtiyahClassical,Atiyah2001}.

Atiyah also formulated a retarded Minkowski version of the same ansatz.
He introduced the construction in \cite[\S6]{Atiyah2001} and stated the
conjecture explicitly in \cite[Conjecture~1.6.1]{Atiyah2010}.  In the
notation used here, Atiyah's original conjecture is the following.

\begin{conjecture}[Atiyah's Minkowski space conjecture]
\label{conj:atiyah}
Let $n\ge2$.  Let $\xi_1,\ldots,\xi_n\subset\M^{3,1}$ be $n$
nonintersecting worldlines, and let $x_1,\ldots,x_n$ be $n$ distinct events
with $x_i\in\xi_i$ for every $i$.  For each $i\ne j$, let $u_{ij}$ be the
point in the celestial sphere of $x_i$ at which the past light cone at
$x_i$ meets $\xi_j$.  Identify these celestial spheres by parallel
translation and form the polynomials $\beta_i$ from the roots $u_{ij}$,
$j\ne i$.  Then $\beta_1,\ldots,\beta_n$ are linearly independent over
$\C$.
\end{conjecture}

The conjecture is stated in the context of the preceding retarded
construction and therefore presupposes that the indicated celestial points
are defined.  For the proofs below, we work in a smaller, geometrically
well-posed subclass.  Namely, let $\xi_1,\ldots,\xi_n$ be pairwise disjoint
future-directed timelike worldlines, choose a marked event $x_i\in\xi_i$ on
each worldline, and suppose that, for every $i\ne j$, the past light cone of
$x_i$ meets $\xi_j$ in a unique point $y_{ij}$.  We call such marked data
\emph{admissible}.  Because Minkowski space is flat, parallel translation
identifies the celestial spheres based at the different marked events with
one common sky, exactly as in Atiyah's formulation
\cite[\S1.6]{Atiyah2010}.  The past-null ray
\[
 u_{ij}=[y_{ij}-x_i]_+,
 \qquad [v]_+:=\{\lambda v:\lambda>0\},
\]
then determines, after one common projective identification of this sky with
$\PP^1(\C)$, a linear factor $\ell_{ij}$, and the $i$th form is
\[
 \beta_i=\prod_{j\ne i}\ell_{ij}\in W_n.
\]
Section~\ref{sec:retarded} gives the causal construction, lift conventions,
and coordinate invariance in full detail.

The main theorem below refutes Conjecture~\ref{conj:atiyah}.  Indeed,
our counterexample consists of complete, pairwise disjoint, future-directed
timelike affine lines, and every ordered retarded intersection exists
uniquely.  It therefore satisfies Atiyah's original geometric input while
the associated polynomials are linearly dependent.  We construct the counterexamples by restricting attention
to the admissible timelike subclass, which serves only to make every retarded datum unambiguous and to show that the failure already occurs for uniform
subluminal motion; of course a counterexample in this subclass is automatically a counterexample to the original conjecture.

Atiyah singled out the case $n=3$ as a prove-or-disprove challenge and
identified uniform motion as a natural version of the question
\cite[\S1.6]{Atiyah2010}.  The purpose of this paper is to give a negative
answer in that very class and then to propagate the failure to every
particle number $n\ge3$.

\subsection{Previous results and the remaining Minkowski case}

The literature separates naturally into the theory concerning the flag-map problem, positive
results for the Euclidean and hyperbolic branches, and extensions of the polynomial formalism.  We review these developments before isolating the
uniform-motion Minkowski problem left open by Atiyah.

\medskip\noindent\textbf{Foundations and the flag-map problem.}
The transported-spin construction of Berry and Robbins supplied the physical and geometric motivation \cite{BerryRobbins}.  Atiyah formulated
the resulting equivariant flag-map problem, gave an explicit solution of its
bare existence version, and proposed the more rigid binary-form construction
considered here in \cite{AtiyahClassical} and \cite{Atiyah2001}.  Atiyah
and Bielawski subsequently proved the existence of smooth $W$-equivariant
flag maps for arbitrary compact Lie groups by using Nahm's equations
\cite{AtiyahBielawski}.  That existence theorem does not, however, prove
the nondegeneracy of Atiyah's specific binary forms.  Atiyah introduced the
forms and their coefficient determinant in \cite{AtiyahClassical} and
\cite{Atiyah2001}.  Atiyah and Sutcliffe then developed the
scale-invariant normalized determinant systematically, formulated the
stronger Conjectures~II and~III, supplied extensive numerical evidence, and
computed the associated energy-minimizing configurations for up to $32$
particles \cite{AtiyahSutcliffe}.

\medskip\noindent\textbf{Positive results in the Euclidean branch.}
The Euclidean linear-independence conjecture is elementary for $n\le3$,
and Atiyah also proved it for collinear configurations
\cite{AtiyahClassical,Atiyah2001}.  Eastwood
and Norbury proved it for every configuration of four points
\cite{EastwoodNorbury}.  {\DJ}okovi\'c proved the conjecture for two special
types of configurations, obtaining the stronger Atiyah--Sutcliffe
conclusion for one of them \cite{DjokovicSpecial}; he also treated the
nonplanar family in which some points lie on a line and the remaining points
form a regular polygon in a perpendicular plane whose centroid lies on that
line \cite{DjokovicDihedral}.  For almost-collinear configurations, Svrtan
and Urbiha proposed symmetric-function conjectures implying
Conjectures~II and~III and carried out computations through $n=6$; for
$n=4$ they also verified Conjectures~II and~III for several infinite
families of tetrahedra \cite{SvrtanUrbihaAlmost}.  Their later work verified Conjectures~II and~III
for parallelograms, cyclic quadrilaterals, and further infinite tetrahedral
families, proposed stronger four-point statements, and carried the
multi-Schur computations through $n=9$ \cite{SvrtanUrbihaStrong}.  Mazur
and Petrenko proved Conjecture~II for regular polygons and convex
quadrilaterals and Conjecture~III for inscribed quadrilaterals
\cite{MazurPetrenko}.  Bou Khuzam and Johnson gave a computer-aided proof of
Conjectures~II and~III for all Euclidean four-point configurations
\cite{BouKhuzamJohnson}, while Malkoun later gave a new proof of four-point
linear independence by showing that the associated Gram matrix is positive
definite \cite{MalkounFourPoints}.

\medskip\noindent\textbf{Hyperbolic, Lie-theoretic, and topological extensions.}
For four points in hyperbolic space, Malkoun proved linear independence for
noncoplanar configurations and for configurations in which one point lies
in the hyperbolic convex hull of the other three
\cite{MalkounHyperbolic}.  In the coplanar case, he proved the stronger
bound $|D|\ge1$ for convex quadrilaterals and also obtained the nonvanishing
result attributed there to Y.~Zhang and J.~Ma for nonconvex quadrilaterals
\cite{MalkounCoplanar}.

Beyond the unitary problem, Malkoun constructed an explicit symplectic
candidate and proved its linear-independence conjecture for $n=2$
\cite{MalkounSymplectic}; he then gave analogous explicit constructions for
the remaining classical groups \cite{MalkounClassical}.  He also introduced
a multivariable extension of the polynomial problem
\cite{MalkounSeveralVariables}, proved that the $U(2m)$ conjectures imply
the corresponding $Sp(m)$ conjectures through root-system domination
\cite{MalkounRootSystems}, formulated a weight-theoretic rank conjecture
\cite{MalkounWeights}, and derived a global expansion formula for the
Atiyah--Sutcliffe determinant \cite{MalkounDeterminant}.  His later graph extension replaces the complete graph by an arbitrary
finite simple graph; for the complete graph, its first two conjectures
specialize to Atiyah--Sutcliffe Conjectures~I and~II
\cite{MalkounGraphs}.  Finally, Guerra and Salvatore showed that suitable
strong complex and planar real forms of the Euclidean conjectures would
produce, respectively, $E_3$- and $E_2$-algebra structures on disjoint
unions of unordered full flag manifolds \cite{GuerraSalvatore}.

\medskip\noindent\textbf{The remaining uniform-motion problem.}
The Euclidean and hyperbolic branches impose reciprocity relations on
oppositely ordered roots; general retarded worldlines do not.  Atiyah's 2001
cyclic construction produced dependent forms along straight spatial tracks,
but the required velocities were nonuniform.  He therefore asked whether
the example could be modified to use straight worldlines in Minkowski space,
or whether linear independence might instead hold for all such worldlines
\cite[\S6, p.~1387]{Atiyah2001}.  In the 2010 lectures he recalled that the
earlier proposed realization involved faster-than-light motion and again
singled out uniform motion with $n=3$ as a prove-or-disprove challenge
\cite[\S1.6]{Atiyah2010}.  To the author's knowledge, the literature cited above contains no
published resolution of the uniformly moving timelike problem before the
present work.  The results below answer it negatively.  They do not contradict the
Euclidean or hyperbolic results just reviewed: the counterexample is neither static nor formed by inertial rays
issuing from a common event.

\subsection{The explicit family and the main results}
\subsubsection{A Family of Three Worldlines}
The counterexample is planar.  We work in
$\M^{2,1}=\{z=0\}\subset\M^{3,1}$ with metric
$-dt^2+dx^2+dy^2$.  This inclusion preserves the causal relations and the
retarded intersections used below.  Consequently, a counterexample in
$\M^{2,1}$ is already a counterexample in $\M^{3,1}$.

For a real parameter $T$, consider the complete affine worldlines
\begin{align}
 \xi_1(r)&=(r,0,0),\label{eq:line1}\\
 \xi_2(r)&=\left(-\frac23+r,-\frac14+\frac r2,
                         \frac15-\frac r{10}\right),\label{eq:line2}\\
 \xi_3(r)&=\left(T+r,1+\frac{4r}{5},
                         \frac18-\frac r5\right),\label{eq:line3}
\end{align}
with marked events $x_i=\xi_i(0)$.  The direction vectors and spatial
tracks are independent of $T$; only the temporal placement of the third
line changes.  Set
\[
 I=\left[\frac34,\frac9{10}\right],\qquad
 T_- =\frac{82950}{10^5},\qquad
 T_+ =\frac{82951}{10^5},\qquad J=[T_-,T_+].
\]

\subsubsection{Main Theorems}
\begin{samepage}
\begin{theorem}[The three-worldline counterexample]\label{thm:main}
There is a unique parameter $T_*\in J$ for which the three Atiyah quadratics associated with \eqref{eq:line1}--\eqref{eq:line3} are linearly
dependent.  For every $T\in I$, the three
parametrized lines are complete and pairwise disjoint, their tangent vectors
are future directed and timelike, and each ordered pair has a unique
retarded intersection.  At $T=T_*$:
\begin{enumerate}
\item[(i)] the real coefficient determinant has a simple zero;
\item[(ii)] the six ordered celestial roots are pairwise distinct; and
\item[(iii)] the coefficient matrix has rank exactly two.
\end{enumerate}
Consequently, the associated real coefficient-matrix path meets the smooth
rank-two stratum of the determinantal hypersurface
$\{\det=0\}\subset M_3(\R)$ transversely.  In particular, the timelike formulation above, and hence Atiyah's printed
Minkowski-space conjecture, is false already for three observers moving
uniformly with strictly subluminal velocities.
\end{theorem}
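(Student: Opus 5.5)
The proof is a certified computation organized in four steps. I will (1) verify the geometric admissibility on all of $I$, (2) compute the six retarded roots explicitly as functions of $T$, (3) assemble the real $3\times3$ coefficient determinant $D(T)$ and isolate a simple zero in $J$ by a sign change plus a derivative bound, and (4) check distinctness of roots and rank two at that zero.

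\textbf{Admissibility on $I$.} The direction vectors are $(1,0,0)$, $(1,\tfrac12,-\tfrac1{10})$ and $(1,\tfrac45,-\tfrac15)$. Their spatial speeds squared are $0$, $\tfrac14+\tfrac1{100}<1$ and $\tfrac{16}{25}+\tfrac1{25}<1$, so all three are future-directed timelike.

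For disjointness, I would solve $\xi_i(r)=\xi_j(s)$ for each pair. This is a linear system of three equations in two unknowns. Pairs $(1,2)$ and $(1,3)$ are $T$-independent apart from the time coordinate, and an inconsistency check settles them. Pair $(2,3)$ has non-parallel spatial tracks that meet at one spatial point. I would compute the two arrival times there and show that they differ for every $T\in I$; this reduces to a linear inequality in $T$.

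For retarded intersections, fix $i\ne j$ and write $\xi_j(s)-x_i=a+sv$ with $v$ timelike. The condition $\langle a+sv,a+sv\rangle=0$ is a quadratic in $s$ with negative leading coefficient $\langle v,v\rangle<0$. A timelike line meets the past cone of a point not on it exactly once, so I take the root with negative time component. Uniqueness follows from this standard fact, so the only real work is checking that $x_i\notin\xi_j$.

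\textbf{Roots and determinant.} Using stereographic projection of the spatial unit direction $(\cos\theta,\sin\theta)$ of $y_{ij}-x_i$, the planar roots lie on the unit circle. In the planar case it is convenient to choose the identification so that they become real, for example via $\tan(\theta/2)$ or the projection from a point off the $xy$-plane that the paper presumably fixes in Section~\ref{sec:retarded}. Then each $\beta_i=\ell_{ij}\ell_{ik}$ has real coefficients and $D(T)=\det[\beta_1,\beta_2,\beta_3]$ is real.

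Each root involves a square root, namely the discriminant of the quadratic above, and these are smooth in $T$ on $I$. Only pairs $(i,3)$ and $(3,j)$ depend on $T$.

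The nonvanishing statements should be organized by the standard identity for three quadratics. Up to normalization, $D$ is a polynomial in the six roots and is antisymmetric under swapping the two roots of any one form. I would then evaluate $D(T_-)$ and $D(T_+)$ with interval arithmetic, or exact rational enclosures of the square roots, and show that they have opposite signs.

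\textbf{Main obstacle: uniqueness of the zero in $J$.} I expect the hardest step to be certifying that the zero in $J$ is the only one and that it is simple. The interval $J$ has width $10^{-5}$, so I would bound $D'(T)$ away from $0$ on all of $J$ by interval evaluation of the explicit derivative. That gives monotonicity, hence uniqueness of $T_*$ in $J$ and simplicity (i).

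For (ii), I would bound the fifteen pairwise root differences away from $0$ on $J$ using enclosures at the endpoints together with Lipschitz bounds.

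For (iii), I would exhibit a $2\times2$ minor that is nonzero on $J$; for example, $\beta_1$ and $\beta_2$ are independent because their root sets differ. Rank two together with $D'(T_*)\ne0$ gives transversality, since $d\det$ at a rank-two matrix is the nonzero adjugate pairing.

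Finally, the embedding $\M^{2,1}\subset\M^{3,1}$ noted before the theorem transfers the counterexample to the original conjecture.
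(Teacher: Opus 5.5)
Your proposal is correct and follows essentially the paper's route. The shared steps are:
\begin{itemize}
\item explicit disjointness on $I$, where the $(2,3)$ time mismatch forces $T=107/24\notin I$;
\item a real half-angle (Cayley) coordinate for the six retarded roots;
\item a certified sign change at $T_\pm$ together with an interval bound $\Delta_H'<0$ on $J$;
\item nonvanishing of the fifteen pairwise brackets on $J$, which gives distinct roots and hence rank two;
\item the adjugate argument for transversality.
\end{itemize}
One harmless slip: the determinant is symmetric, not antisymmetric, under swapping the two roots of a single form, because $\beta_i=\ell_{ij}\ell_{ik}$ does not depend on the order of its factors. Your argument never uses that claim, and the identity the paper actually relies on is the six-bracket formula of Lemma~\ref{lem:bracket}.
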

\end{samepage}

A counterexample for $n=3$ refutes Conjecture~\ref{conj:atiyah} as a
universal statement, but it does not by itself refute the assertion at each
prescribed $N>3$: adjoining worldlines changes both the number of forms and
their degree.  The next theorem supplies a separate geometric construction
at every particle number.

\begin{theorem}[Counterexamples for every $N\ge3$]
\label{thm:all-n-intro}
For every integer $N\ge3$, there are $N$ pairwise disjoint complete affine
worldlines in $\M^{2,1}\subset\M^{3,1}$, with future-directed timelike
direction vectors and one marked event on each line, such that every
ordered pair has a unique retarded intersection and the associated $N$
binary forms of degree $N-1$ are linearly dependent.
\end{theorem}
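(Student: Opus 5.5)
Starting from the three-worldline counterexample of Theorem~\ref{thm:main} at $T=T_*$, we adjoin $N-3$ further worldlines chosen so that the first three forms acquire a common factor. Suppose the new lines $\xi_4,\ldots,\xi_N$ can be arranged so that, for every $i\in\{1,2,3\}$ and every $k\ge4$, the retarded direction $u_{ik}$ is one and the same point $p_k$ of the common sky, independent of $i$. Then
\[
 \beta_i^{(N)}=\beta_i^{(3)}\cdot\prod_{k=4}^{N}\ell_{p_k},\qquad i=1,2,3,
\]
where $\ell_{p_k}$ is the linear factor attached to $p_k$. A nontrivial relation $\sum_{i\le3}c_i\beta_i^{(3)}=0$ from Theorem~\ref{thm:main} then gives $\sum_{i\le3}c_i\beta_i^{(N)}=0$. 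This is a nontrivial linear dependence among the $N$ forms, whatever $\beta_4,\ldots,\beta_N$ turn out to be.

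To force $u_{ik}=p_k$ for $i=1,2,3$, I would use a null-translation construction. Fix a past-directed null vector $\nu_k$ with $[\nu_k]_+=p_k$, and let $\xi_k$ pass through the event $x_i+s\nu_k$ for all three $i$ simultaneously. That cannot be done literally. Instead, I would take $\xi_k$ far away along $\nu_k$ and use a limiting argument. The cleaner route is to translate \emph{the first three lines and marks} rather than the new line.

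Concretely, I would proceed as follows. The three marked events $x_1,x_2,x_3$ span (together with a direction) an affine plane. Choose the new line $\xi_k$ so that it meets the past null cones of $x_1,x_2,x_3$ at $x_i+s_{ik}\nu_k$ for scalars $s_{ik}>0$. This requires the three points $x_i+s_{ik}\nu_k$ to be collinear on a timelike line. Since we work in $\M^{2,1}$, and the three points lie on three parallel null lines, we can pick $s_{ik}$ so that they are collinear, which is one linear condition on the $s$'s. Taking the $s_{ik}$ large, the resulting line can be made timelike: its direction approaches a combination close to $\nu_k$ plus a bounded correction, so I will need to tune this. If working in the plane is too rigid, I would instead work in $\M^{3,1}$, where there is one extra dimension of freedom and the timelike condition is easier to satisfy. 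The paper insists on $\M^{2,1}$, so I would first try the plane.

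Afterwards I must check the remaining requirements. The lines must be pairwise disjoint, including disjointness among the new lines; this is achieved by choosing distinct null directions $p_k$ and generic $s$. Every other ordered retarded intersection must exist and be unique; for timelike lines in a flat space this holds automatically, since a complete timelike line meets every past cone exactly once. Future-directed orientation is arranged by reparametrization.

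The main obstacle is the previous step: producing a timelike line whose retarded points seen from all three $x_i$ share the common direction $p_k$. I expect the right choice is to pick $p_k$ along the null direction for which $x_1,x_2,x_3$ project nicely, or to place $\xi_k$ through a single event on a common null ray after adjusting via the Lorentz invariance discussed in Section~\ref{sec:retarded}, and then induct on $N$.
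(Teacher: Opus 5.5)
There is a genuine gap. Your reduction matches the paper's Lemma~\ref{lem:common-factor}, and that part is fine. If every old observer $i\le3$ sees each new emitter in one common direction, then $\beta_i^{(N)}$ is $\beta_i^{(3)}$ times a common product of linear factors. The relation from Theorem~\ref{thm:main} then propagates, with zero coefficients on the new forms.

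The gap is the geometric realization, which you leave as ``the main obstacle''. The route you sketch toward it also contains a false step. For an arbitrary past null $\nu_k$, the three parallel null lines $x_i+\R\nu_k$ are not coplanar, and then no line meets all three. Indeed, projecting along $\nu_k$, a common transversal would force the images of $x_1,x_2,x_3$ to be collinear, i.e.\ $\det[\,\nu_k\;\;x_2-x_1\;\;x_3-x_1\,]=0$. So collinearity of the points $x_i+s_{ik}\nu_k$ is not ``one linear condition on the $s$'s'' in general. It is first a constraint on $p_k$: $\nu_k$ must lie in the plane $\Pi$ of the three marked events.

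The missing idea is that this plane is Lorentzian. It contains $x_2-x_1$, and $\ip{x_2-x_1}{x_2-x_1}=-1231/3600<0$, so $\Pi$ contains a past null vector $\omega$; this is exactly where the causal type of $\Pi$ enters. Set $U_\star=x_1-x_2$ and write $x_i=\alpha_iU_\star+\rho_i\omega$. Then every line $c\,\omega+\R U_\star$ with $c>\max_i\rho_i$ is timelike and contains
$$\alpha_iU_\star+c\,\omega=x_i+(c-\rho_i)\omega.$$
This is a past-null displacement from $x_i$, so by uniqueness it is the retarded point. This is the content of Proposition~\ref{prop:null-translation} and Lemma~\ref{lem:stabilizing-null}.

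Your heuristics for the remaining requirements also need correcting.
\begin{itemize}
\item \emph{Timelikeness.} Adding a common constant to the $s_{ik}$ does not change the transversal's direction, which depends only on the differences $s_{2k}-s_{1k}$ and $s_{3k}-s_{1k}$. You get timelikeness by choosing those differences (e.g.\ $s_{1k}=s_{2k}$ gives direction $x_2-x_1$). Only then do you shift along $\omega$ to make all $s_{ik}>0$.
\item \emph{Disjointness of the new lines.} Distinct $p_k$ are not available: $\Pi$ contains only two null directions. Moreover, every new line lies in the two-plane $\Pi$, so two new lines are disjoint only if they are parallel. The paper uses a single $\omega$ with distinct offsets $c_k$.
\item \emph{Old--new disjointness.} You need $U_i\notin\Pi$; otherwise $\xi_i\subset\Pi$ and it meets any nonparallel new line. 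With $U_i\notin\Pi$, one has $\xi_i\cap\Pi=\{x_i\}$, and $x_i$ is not on the new line because $c_k\ne\rho_i$. The paper checks $U_i\notin\Pi$ with three explicit $3\times3$ determinants.
\end{itemize}
No induction on $N$ is needed.
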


\begin{remark}

The two theorems give more than the failure of a single universally
quantified assertion.  Corollary~\ref{cor:inertial-dichotomy} shows that,
within the class of complete pairwise disjoint timelike affine lines, universal
independence holds exactly for $n=2$.  Corollary~\ref{cor:all-n-normalized}
shows that the normalized Atiyah--Sutcliffe determinant is defined and
vanishes on every configuration constructed here.  The three-worldline
core has six distinct roots; the stabilization for $N>3$ intentionally
introduces repeated common roots.  Atiyah's formulation starts with nonintersecting worldlines; marked
events chosen on different worldlines are therefore automatically distinct.
Neither the construction nor the printed conjecture requires all ordered
celestial roots to be pairwise distinct; see \cite[\S6]{Atiyah2001} and
\cite[Conjecture~1.6.1]{Atiyah2010}.  Requiring all ordered roots to remain
distinct for every $N$ would define a stronger problem.

\end{remark}

\begin{remark}
No common-time or spacelike-slice condition appears in Atiyah's printed
Conjecture~1.6.1 \cite{Atiyah2010}, so the marked events used here are
legitimate.  They are not simultaneous in any
inertial frame; imposing synchronization would define a different,
stronger problem.  Section~\ref{sec:geometry} records this and the other
boundaries of the conclusion precisely.
    
\end{remark}

\subsection{Main idea of the construction}

The proof of the three-line theorem is a real determinantal wall crossing.
For each $T\in I$, Proposition~\ref{prop:admissible} gives six admissible
ordered observation--emitter pairs.  Lemma~\ref{lem:retarded} solves each
retarded null equation explicitly, and the planar Cayley coordinate of
Section~\ref{sec:retarded} represents the six celestial roots by nonzero
real lifts
\[
 H_{ij}(T)\in\R^2\setminus\{0\},\qquad i\ne j.
\]
The two roots seen by observer $i$ determine the coefficient row of one
binary quadratic.  Lemma~\ref{lem:bracket} then converts the determinant of
the three coefficient rows into the six-bracket expression
\begin{equation}\label{eq:intro-delta}
\begin{aligned}
 \Delta_H(T)={}&[H_{12},H_{23}][H_{13},H_{32}][H_{21},H_{31}]\\
 &+[H_{12},H_{31}][H_{13},H_{21}][H_{23},H_{32}].
\end{aligned}
\end{equation}
This identity is the algebraic bridge between the six retarded directions
and linear dependence of the three forms.

The exact formulas in Section~\ref{sec:lifts} reduce every entry of
\eqref{eq:intro-delta} to rational expressions in the three variable
radicals $\sqrt{P_1(T)},\sqrt{P_2(T)},\sqrt{P_3(T)}$ and the three fixed
radicals $\sqrt{11},\sqrt{41},\sqrt{65}$.  Proposition~\ref{prop:crossing},
proved in Appendix~\ref{sec:interval-proof} by exact interval estimates, gives
\begin{equation}\label{eq:intro-crossing-estimates}
 \Delta_H(T_-)>0>\Delta_H(T_+),\qquad
 \Delta_H'(T)<0\quad(T\in J).
\end{equation}
The endpoint signs give a zero, and the derivative bound makes it unique in
$J$ and simple.  The same proposition gives the fifteen nonvanishing inequalities
\begin{equation}\label{eq:intro-root-separation}
 [H_{ij}(T),H_{kl}(T)]\ne0
 \quad\bigl(T\in J,\ (i,j)\ne(k,l)\bigr).
\end{equation}
Thus the six roots remain pairwise distinct at the crossing.  Since the
coefficient matrix is singular but no two quadratic rows are proportional,
its rank is exactly two.  At a rank-two matrix the differential of the
determinant is nonzero; the additional inequality $\Delta_H'(T_*)\ne0$ in
\eqref{eq:intro-crossing-estimates} therefore gives transversality to the
smooth rank-two stratum of the real determinantal hypersurface.  In
particular, the dependence is not produced by a collision, a lightlike
limit, a repeated root, or a higher-order contact: varying only the time
offset of the third line moves the retarded data through a genuine
rank-two wall.

The proof for $N>3$ uses a different mechanism.  At $T=T_*$, choose a
nonzero relation
\begin{equation}\label{eq:core-relation-intro}
 \lambda_1\beta_1^{(3)}+
 \lambda_2\beta_2^{(3)}+
 \lambda_3\beta_3^{(3)}=0.
\end{equation}
Section~\ref{sec:stabilization} constructs a Lorentzian two-plane
$\Pi=\operatorname{span}\{U_\star,\omega\}$, where $U_\star$ is future
timelike and $\omega$ is past null, and adjoins pairwise disjoint lines
\[
 \xi_k(r)=rU_\star+c_k\omega,\qquad 4\le k\le N.
\]
For each old observer $i\le3$ and each new emitter $k>3$, the retarded
displacement is a positive multiple of the same past-null vector $\omega$.
Hence every new emitter contributes the same factor $L_\star$ to each of
the first three forms, and
\begin{equation}\label{eq:proof-chain-all-n}
 \beta_i^{(N)}=L_\star^{N-3}\beta_i^{(3)},
 \qquad i=1,2,3.
\end{equation}
Multiplying \eqref{eq:core-relation-intro} by $L_\star^{N-3}$ gives a
nontrivial relation among the $N$ forms.  This common-factor identity is
therefore the geometric bridge from the nondegenerate three-line core to
counterexamples at every particle number.

\subsection{Notation and organization}

The first index in $u_{ij}$ is always the observer and the second the
emitter.  Thus $x_i$ is the marked observation event, $y_{ij}\in\xi_j$ is
the retarded emission event, and $q_{ij}=x_i-y_{ij}$ is future null.  We
reserve $T$ for the one-parameter three-line family, $T_*$ for its unique
crossing in $J$, and $N$ for the particle number in the stabilization.

Section~\ref{sec:retarded} develops the retarded celestial forms and fixes a
single projective coordinate on the planar common sky.
Section~\ref{sec:family} verifies the timelike and pairwise-disjoint nature
of the explicit family.  Section~\ref{sec:lifts} proves the six-bracket
identity and derives the six exact root lifts.  Section~\ref{sec:crossing}
proves Theorem~\ref{thm:main} from Proposition~\ref{prop:crossing}.
Section~\ref{sec:stabilization} proves Theorem~\ref{thm:all-n-intro} by the
null-translation mechanism.  Section~\ref{sec:geometry} extracts the
projective balance, normalized determinant, dichotomy, and exact scope.
Appendix~\ref{sec:interval-proof} gives the exact rational interval proof
of Proposition~\ref{prop:crossing}.

\section{Retarded celestial forms}\label{sec:retarded}

This section makes the construction in Conjecture~\ref{conj:atiyah}
precise and proves the two causal facts needed later: a complete timelike
affine emitter has a unique retarded point, and reciprocal retarded rays of
two disjoint affine lines are distinct.

\subsection{The common sky and the binary forms}

We first fix the causal and projective conventions.  Let
\[
 \M^{3,1}=(\R^4,\eta),\qquad
 \eta=-dt^2+dx^2+dy^2+dz^2,
\]
and write $\ip{\cdot}{\cdot}$ for the corresponding bilinear form.  A
nonzero vector is timelike, null, or spacelike according as its squared norm
is negative, zero, or positive; timelike and null vectors are causal.  A
causal vector is future directed when its time component is positive and past
directed when its time component is negative.  The incoming celestial
sphere is the projectivized past null cone
\begin{equation}\label{eq:celestial-quotient}
 \mathcal S^-=
 \{v\ne0:\ip{v}{v}=0,\ v^0<0\}/\R_{>0}.
\end{equation}
Flat parallel translation identifies the null cones based at different
events with this single common sky, which is precisely the identification
used in Atiyah's Minkowski construction \cite[\S1.6]{Atiyah2010}.

By a future-directed timelike worldline we mean the image of a $C^1$
embedding whose tangent is everywhere future directed and timelike.  Let
$\xi_1,\ldots,\xi_n$ be pairwise disjoint such worldlines and choose
marked events $x_i\in\xi_i$.  We call these data \emph{admissible} if,
for every $i\ne j$, the past light cone of $x_i$ meets $\xi_j$ in exactly
one point $y_{ij}$.  This condition makes Atiyah's retarded construction
single-valued; it is automatic for the complete timelike affine lines used
below.  With $[v]_+:=\{\lambda v:\lambda>0\}$, the incoming celestial
root is
\begin{equation}\label{eq:intro-root}
 u_{ij}=[y_{ij}-x_i]_+\in\mathcal S^-.
\end{equation}
Fix one projective identification
$\Phi:\mathcal S^-\to\PP^1(\C)$.  If
$h_{ij}=(a_{ij},b_{ij})\in\C^2\setminus\{0\}$ lifts $\Phi(u_{ij})$, set
\begin{equation}\label{eq:retarded-forms}
 \ell_{ij}(Z_0,Z_1)=b_{ij}Z_0-a_{ij}Z_1,
 \qquad
 \beta_i=\prod_{j\ne i}\ell_{ij}
 \in\operatorname{Sym}^{n-1}((\C^2)^*).
\end{equation}
Changing a lift rescales the corresponding form $\beta_i$ by a nonzero
scalar.  A common orientation-preserving projective change of coordinate acts on
every form through the same invertible symmetric-power transformation.  An
orientation-reversing change is the composition of such a transformation
with simultaneous complex conjugation.  Since both operations are bijective
on the coefficient space and are applied simultaneously to all forms, they
preserve both linear dependence and linear independence.

Conjecture~\ref{conj:atiyah} asserts that the forms in
\eqref{eq:retarded-forms} are linearly independent for every admissible
marked configuration.  For $n=3$ the forms are binary quadratics.  Let $M$ be their coefficient
matrix in the ordered basis
\[
 (Z_0^2,Z_0Z_1,Z_1^2).
\]
Then $\beta_1,\beta_2,\beta_3$ are linearly dependent if and only if
$\det M=0$.

\subsection{Retarded intersections with a complete timelike affine line}

We next compute the retarded root determined by one observation event and
one timelike affine emitter.  For an observation event $x$ and an emitter
$\xi(s)$, put $q(s)=x-\xi(s)$.  At the retarded point, $q(s)$ is future
null.
\begin{lemma}[Retarded point on a timelike affine line]\label{lem:retarded}
Let $\xi(s)=x_0+sU$, where $U$ is future-directed timelike, and put
$\kappa^2=-\ip{U}{U}>0$.  If $x\notin\xi(\R)$, set
\[
 d=x-x_0,\qquad A=\ip{d}{U},\qquad
 R=\sqrt{A^2+\kappa^2\ip{d}{d}}.
\]
Then $R>0$, and the past light cone of $x$ meets $\xi$ at exactly one point,
namely $\xi(s^-)$ with
\begin{equation}\label{eq:retarded}
  s^-=\frac{-A-R}{\kappa^2}.
\end{equation}
The vector
\begin{equation}\label{eq:qdef}
 q=x-\xi(s^-)=d+\frac{A+R}{\kappa^2}U
\end{equation}
is future directed and null.
\end{lemma}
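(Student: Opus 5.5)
The plan is to reduce the retarded condition to a scalar quadratic in $s$ and read off its roots. With $q(s)=x-\xi(s)=d-sU$ we have
\[
 \ip{q(s)}{q(s)}=\ip{d}{d}-2s\,\ip{d}{U}+s^2\ip{U}{U}=-\kappa^2s^2-2As+\ip{d}{d}.
\]
The past light cone of $x$ meets $\xi$ at $\xi(s)$ exactly when $q(s)$ is null and future directed, i.e.\ $\ip{q(s)}{q(s)}=0$ and $q(s)^0>0$. Multiplying by $-1$, the null condition becomes $\kappa^2s^2+2As-\ip{d}{d}=0$. Its discriminant is $4(A^2+\kappa^2\ip{d}{d})=4R^2$, and its roots are $s^\pm=(-A\pm R)/\kappa^2$.

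The first step is to show $R^2>0$. I will use the orthogonal decomposition $d=d_\parallel+d_\perp$ with $d_\parallel=-(A/\kappa^2)U$. The complement $U^\perp$ is spacelike, being the orthogonal complement of a timelike vector in Lorentzian signature. A direct computation gives $\ip{d}{d}=-A^2/\kappa^2+\ip{d_\perp}{d_\perp}$, hence
\[
 A^2+\kappa^2\ip{d}{d}=\kappa^2\ip{d_\perp}{d_\perp}\ge0,
\]
with equality iff $d_\perp=0$. But $d_\perp=0$ means $d\in\R U$, i.e.\ $x\in\xi(\R)$, which is excluded. So $R>0$ and the two roots are distinct.

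Next I will decide which root is future-directed. Since $q(s^+)$ and $q(s^-)$ are both null, I will use the standard fact that a nonzero causal vector $v$ is future directed iff $\ip{v}{U}<0$ for the future timelike $U$ (reverse Cauchy--Schwarz, or simply the sign of the time component after a boost). Then $\ip{q(s)}{U}=A+s\kappa^2$, so
\[
 \ip{q(s^\pm)}{U}=\pm R.
\]
Hence $q(s^-)$ is future directed and $q(s^+)$ is past directed. Both vectors are nonzero, since $x\notin\xi$. Therefore exactly one point of $\xi$ lies on the past light cone of $x$, namely $\xi(s^-)$, and $q=d-s^-U=d+\frac{A+R}{\kappa^2}U$ is future null, as claimed.

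The only mildly delicate step is the justification that the sign of $\ip{v}{U}$ detects time orientation for null $v$. I would prove it by boosting so that $U=(\kappa,0,0,0)$: then $\ip{v}{U}=-\kappa v^0$, and a Lorentz transformation preserving time orientation does not change the sign of the time component of a causal vector.
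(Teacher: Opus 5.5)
Your proposal is correct and is essentially the paper's argument. The paper boosts to the rest frame of $U$ at the outset, so your $\kappa^2\langle d_\perp,d_\perp\rangle$ appears there as $\kappa^2|\mathbf r|^2$, and your sign test $\langle q(s^\pm),U\rangle=\pm R$ appears as the explicit future-null vectors $q(s^-)=(|\mathbf r|,\mathbf r)$ and $\xi(s^+)-x=(|\mathbf r|,-\mathbf r)$; your invariant phrasing is the same computation.
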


\begin{proof}
The null equation is
\[
  0=\ip{d-sU}{d-sU}=\ip{d}{d}-2As-\kappa^2s^2,
\]
whose roots are $(-A\pm R)/\kappa^2$.  Choose a proper orthochronous Lorentz
transformation sending $U$ to $\kappa(1,\mathbf0)$, and write the transformed
displacement as $d=(\tau,\mathbf r)$.  Then
$A=-\kappa\tau$, $R=\kappa|\mathbf r|$, and the two roots are
\[
 s^-=(\tau-|\mathbf r|)/\kappa,\qquad
 s^+=(\tau+|\mathbf r|)/\kappa.
\]
At $s^-$ one has
$q=d-s^-\kappa(1,\mathbf0)=(|\mathbf r|,\mathbf r)$, which is future
null.  At the other root,
$\xi(s^+)-x=(|\mathbf r|,-\mathbf r)$ is future null, so $\xi(s^+)$ lies
on the future light cone of $x$.  Since $x$ does not lie on the emitter,
$|\mathbf r|>0$.  The two algebraic roots therefore give one retarded and
one advanced intersection.  In particular, $R>0$, and
\eqref{eq:retarded}--\eqref{eq:qdef} follow.  Transforming back preserves
nullity and time orientation.
\end{proof}

The proof of linear dependence needs only the retarded roots.  The normalized
determinant also requires reciprocal roots to be distinct; the following
lemma supplies that fact for every affine configuration used here.

\begin{lemma}[Reciprocal retarded rays are distinct]
\label{lem:reciprocal-distinct}
Let $\xi_i$ and $\xi_j$ be disjoint complete timelike affine lines, with
marked events $x_i\in\xi_i$ and $x_j\in\xi_j$.  Then their two retarded
directions satisfy $u_{ij}\ne u_{ji}$.
\end{lemma}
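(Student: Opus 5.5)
Suppose, for contradiction, that $u_{ij}=u_{ji}$. Then there is one past-directed null vector $n$ and scalars $a,b>0$ with
\[
 y_{ij}=x_i+a\,n,\qquad y_{ji}=x_j+b\,n .
\]
This uses Lemma~\ref{lem:retarded} for existence and uniqueness of the retarded points. There $y_{ij}\ne x_i$ because $x_i\notin\xi_j$, and similarly for $y_{ji}$. Write $\xi_i=x_i+\R U_i$ and $\xi_j=x_j+\R U_j$ with $U_i,U_j$ future timelike. Since $y_{ji}\in\xi_i$ and $y_{ij}\in\xi_j$, there are real $\alpha,\beta$ with
\[
 y_{ji}=x_i+\alpha U_i,\qquad x_j=y_{ij}+\beta U_j .
\]
Going around the closed loop $x_i\to y_{ij}\to x_j\to y_{ji}\to x_i$ gives
\[
 (a+b)\,n+\beta U_j-\alpha U_i=0 .
\]

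\textbf{Step 1: $\beta\ne0$.} If $\beta=0$, the loop relation reads $(a+b)n=\alpha U_i$. The left side is nonzero and null, while the right side is either zero or timelike. This is a contradiction, so $\beta\ne0$.

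\textbf{Step 2: the two lines lie in one affine plane.} Since $\beta\ne0$, the loop relation gives
\[
 U_j\in\Pi:=\operatorname{span}\{U_i,n\}.
\]
The space $\Pi$ is a genuine two-plane, because a null vector cannot be proportional to a timelike one. The line $\xi_i$ lies in the affine plane $x_i+\Pi$. The line $\xi_j$ passes through $y_{ij}=x_i+a n\in x_i+\Pi$ with direction $U_j\in\Pi$, so it lies in the same affine plane.

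\textbf{Step 3: the lines are parallel.} Two complete lines in a common affine two-plane either meet or are parallel. The lines are disjoint by hypothesis, so $U_j=cU_i$ for some $c\ne0$.

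\textbf{Step 4: contradiction.} Substituting $U_j=cU_i$ into the loop relation gives
\[
 (a+b)\,n=(\alpha-\beta c)\,U_i .
\]
As in Step 1, the left side is a nonzero null vector and the right side is zero or timelike. This contradicts $a+b>0$, so $u_{ij}\ne u_{ji}$.

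The step I expect to need the most care is Step 2. The loop relation has to be set up with the correct points on the correct lines, and the degenerate case $\beta=0$ must be disposed of first. Only then can one conclude that $\xi_j$, and not merely its direction, lies in the plane through $x_i$. Once coplanarity holds, the rest is the elementary fact that a null vector is never a nonzero multiple of a timelike vector. That fact is used twice, and no time-orientation or sign bookkeeping is needed beyond $a,b>0$.
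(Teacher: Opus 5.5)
Your proof is correct and is essentially the paper's argument. The paper likewise adds the two retarded emission equations to write a nonzero null vector as a combination of $U_i$ and $U_j$, uses disjointness of coplanar lines to force $U_j\parallel U_i$, and concludes because a nonzero null vector is never a multiple of a timelike one; the only difference is that the paper splits directly on whether $U_i,U_j$ are linearly independent (working in $\operatorname{span}\{U_i,U_j\}$), which makes your preliminary case $\beta=0$ unnecessary.
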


\begin{proof}
Write
\[
 \xi_i(r)=x_i+rU_i,\qquad \xi_j(s)=x_j+sU_j,
\]
with $U_i,U_j$ future timelike.  Suppose, to the contrary, that the two
retarded directions determine the same past-null ray $[w]_+$.  For some
$a,b>0$ and some $r,s\in\R$, the two retarded emission equations would be
\[
 x_j+sU_j=x_i+aw,
 \qquad
 x_i+rU_i=x_j+bw.
\]
Adding them gives
\begin{equation}\label{eq:reciprocal-contradiction}
 rU_i+sU_j=(a+b)w.
\end{equation}
If $U_i$ and $U_j$ are linearly independent, then
$w\in\operatorname{span}\{U_i,U_j\}$ by
\eqref{eq:reciprocal-contradiction}, and the first emission equation gives
$x_j-x_i\in\operatorname{span}\{U_i,U_j\}$.  Writing
$x_j-x_i=\alpha U_i+\beta U_j$ gives the common point
$x_i+\alpha U_i=x_j-\beta U_j$, contrary to disjointness.  If $U_i$ and $U_j$ are linearly dependent,
then $U_j=\lambda U_i$ for some $\lambda>0$, because both vectors are future
directed.  The left side of
\eqref{eq:reciprocal-contradiction} is therefore
$(r+\lambda s)U_i$, which is either zero or timelike, whereas the right
side is nonzero and null.  This is again impossible.
\end{proof}

\subsection{A common projective coordinate on the planar sky}

We next pass from the retarded displacement to its incoming point on the
common celestial sphere.  For an ordered pair $(i,j)$, apply
Lemma~\ref{lem:retarded} with $x=x_i$ and $\xi=\xi_j$, and denote the
resulting future null vector by $q_{ij}$.  Atiyah's root is the opposite
past-directed ray
\[
 u_{ij}=[\xi_j(s^-_{ij})-x_i]_+=[-q_{ij}]_+\in\mathcal S^-.
\]
Intersecting this ray with the section $t=-1$ gives the incoming sky
direction
\begin{equation}\label{eq:sky}
 n_{ij}=-\frac{(q^x_{ij},q^y_{ij},q^z_{ij})}{q^0_{ij}}\in S^2.
\end{equation}
For a static emitter at spatial position $\mathbf x_j$, formula
\eqref{eq:sky} gives
$n_{ij}=(\mathbf x_j-\mathbf x_i)/
\lvert\mathbf x_j-\mathbf x_i\rvert$, exactly the Euclidean direction
in \cite[Eq.~(1.1)]{Atiyah2010}.  Thus \eqref{eq:sky} uses the same time
orientation as Atiyah.  As in
Atiyah's definition, affine parallel translation identifies the null cones
at the different events; no observer-dependent rest-frame aberration is
applied.

The counterexample is planar, so $q^z_{ij}=0$ and all six directions lie on
the equator $S^1\subset S^2$.  We therefore choose a single real
projective coordinate on that circle.  This keeps the coefficient matrix
real and makes a sign-change argument possible.  The following Cayley
coordinate covers the entire equator, including its exceptional affine
point:
\begin{equation}\label{eq:global-cayley}
 C(n_x,n_y,0)=
 \begin{cases}
  [1+n_x:n_y],&(n_x,n_y)\ne(-1,0),\\
  [0:1],&(n_x,n_y)=(-1,0).
 \end{cases}
\end{equation}
Its inverse is the globally defined formula
\begin{equation}\label{eq:global-cayley-inverse}
 C^{-1}([a:b])=
 \left(\frac{a^2-b^2}{a^2+b^2},
       \frac{2ab}{a^2+b^2},0\right).
\end{equation}
Thus $C:S^1\to\PP^1(\R)$ is a bijection; the second branch in
\eqref{eq:global-cayley} removes the apparent chart singularity at
$(-1,0,0)$.  To compare this real coordinate with the standard complex
stereographic coordinate, we extend $C$ once to the whole celestial sphere
rather than choosing a coordinate separately for each observer.  Let
\[
 \zeta_{\mathrm{hom}}(n_x,n_y,n_z)=
 \begin{cases}
  [n_x+in_y:1-n_z],&(n_x,n_y,n_z)\ne(0,0,1),\\
  [1:0],&(n_x,n_y,n_z)=(0,0,1),
 \end{cases}
\]
and set
\[
 G_{\mathrm C}=\begin{pmatrix}1&i\\[2pt]1&-i\end{pmatrix},\qquad
 \Phi=G_{\mathrm C}^{-1}\circ\zeta_{\mathrm{hom}}.
\]
Here matrices act projectively on homogeneous column vectors.
Lemma~\ref{lem:atiyahcoordinate} verifies that $\Phi|_{S^1}=C$ and compares
this global choice with the standard stereographic coordinate.

Whenever $q^0_{ij}-q^x_{ij}>0$, substituting
\eqref{eq:sky} into \eqref{eq:global-cayley} and multiplying both
homogeneous coordinates by $q^0$ gives the convenient lift
\begin{equation}\label{eq:lift}
 C(n_{ij})=\Phi(u_{ij})=[h^0_{ij}:h^1_{ij}],\qquad
 h_{ij}=(q^0_{ij}-q^x_{ij},-q^y_{ij}).
\end{equation}
Indeed, if $[h]=[a:b]$, then
\[
 (n_x,n_y)=\left(\frac{a^2-b^2}{a^2+b^2},
                         \frac{2ab}{a^2+b^2}\right),
\]
and, for $a=q^0-q^x$ and $b=-q^y$, the null identity
$(q^0)^2=(q^x)^2+(q^y)^2$ gives
\begin{align*}
 a^2+b^2
  &=(q^0-q^x)^2+(q^y)^2
    =2q^0(q^0-q^x),\\
 a^2-b^2
  &=(q^0-q^x)^2-(q^y)^2
    =-2q^x(q^0-q^x),\\
 2ab&=-2q^y(q^0-q^x).
\end{align*}
For a future planar null vector, $q^0-q^x\ge0$; equality forces $q^y=0$ and
corresponds to the exceptional branch of \eqref{eq:global-cayley}.  On the
complement of that point, division by the first
identity yields
\[
 (n_x,n_y)=\left(-\frac{q^x}{q^0},-\frac{q^y}{q^0}\right),
\]
which is \eqref{eq:sky}.  Thus \eqref{eq:lift} gives all six roots in one
projective coordinate.  Section~\ref{sec:lifts} verifies that these six
lifts are nonzero.  For a future planar null vector, nonvanishing is
equivalent to $q^0_{ij}-q^x_{ij}>0$: equality forces $q^y_{ij}=0$ and hence
makes both coordinates in \eqref{eq:lift} vanish.

\begin{lemma}[Comparison with standard stereographic projection]
\label{lem:atiyahcoordinate}
Let $[a:b]\in\PP^1(\R)$ represent a planar sky direction by
\eqref{eq:lift}.  Use stereographic projection from the north pole
$(0,0,1)$, with affine coordinate
$\zeta=(n_x+in_y)/(1-n_z)$.  On the equator this becomes
\begin{equation}\label{eq:cayleyexact}
  \zeta=n_x+in_y=\frac{a+ib}{a-ib}.
\end{equation}
Consequently, the passage from our six roots $h_{ij}$ to their standard
stereographic coordinates is induced by the single matrix
$G_{\mathrm C}\in\operatorname{GL}(2,\C)$ displayed above:
\[
 [a:b]\longmapsto[a+ib:a-ib].
\]
In particular, the three homogeneous quadratics are linearly dependent in
our real coordinate if and only if they are linearly dependent in the
standard stereographic coordinate.
\end{lemma}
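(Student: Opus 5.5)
We prove the lemma by direct computation on the equator, using the inverse formula \eqref{eq:global-cayley-inverse} and then checking that $\Phi$ restricted to $S^1$ is exactly $C$.

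\textbf{Step 1: the formula \eqref{eq:cayleyexact}.} Take a planar direction $(n_x,n_y,0)$ with homogeneous coordinate $[a:b]=C(n_x,n_y,0)$, where $(a,b)\in\R^2\setminus\{0\}$. Since $n_z=0$, the stereographic coordinate is $\zeta=n_x+in_y$. By \eqref{eq:global-cayley-inverse},
\[
 \zeta=\frac{(a^2-b^2)+2iab}{a^2+b^2}=\frac{(a+ib)^2}{(a+ib)(a-ib)}=\frac{a+ib}{a-ib}.
\]
This holds on every branch, because $a-ib\ne0$ for real $(a,b)\ne0$. In homogeneous form it reads $\zeta_{\mathrm{hom}}=[a+ib:a-ib]=G_{\mathrm C}[a:b]$. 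The north pole is not on the equator, so the second branch of $\zeta_{\mathrm{hom}}$ never occurs here.

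\textbf{Step 2: $\Phi|_{S^1}=C$.} From Step 1, $\zeta_{\mathrm{hom}}\circ C^{-1}=G_{\mathrm C}$ on $\PP^1(\R)$. Hence
\[
 \Phi|_{S^1}=G_{\mathrm C}^{-1}\circ\zeta_{\mathrm{hom}}|_{S^1}=C .
\]
This confirms that the single global choice $\Phi$ restricts to the real Cayley coordinate, as claimed before the lemma. The matrix $G_{\mathrm C}$ has determinant $-2i\ne0$, so it lies in $\operatorname{GL}(2,\C)$.

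\textbf{Step 3: dependence is preserved.} Suppose the lifts $h_{ij}$ are replaced by $G_{\mathrm C}h_{ij}$. Each linear factor $\ell_{ij}$ in \eqref{eq:retarded-forms} then transforms by a linear substitution of the variables $(Z_0,Z_1)$ that depends only on $G_{\mathrm C}$. Up to a nonzero scalar this substitution is $(Z_0,Z_1)\mapsto(Z_0,Z_1)G_{\mathrm C}^{-1}$, by the adjugate formula.

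It follows that every quadratic $\beta_i$ is sent, up to a nonzero scalar, to its image under the induced invertible linear map $\operatorname{Sym}^2$ of that substitution. This map is the same for all three forms. An invertible linear map preserves linear (in)dependence, and rescaling individual forms does not affect it either. This is the orientation-preserving case already noted after \eqref{eq:retarded-forms}.

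\textbf{Main obstacle.} The only delicate point is bookkeeping: making sure the exceptional point $(-1,0,0)$, where $[a:b]=[0:1]$ and $\zeta=-1$, is covered. The identity $\zeta=(a+ib)/(a-ib)$ handles it uniformly, so no separate case is needed.
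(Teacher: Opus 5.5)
Your proof is correct and follows the same route as the paper. Both use the computation $n_x+in_y=(a^2-b^2+2iab)/(a^2+b^2)=(a+ib)/(a-ib)$, the fact that $\det G_{\mathrm C}=-2i\neq0$, and the observation that one common invertible substitution on binary quadratics preserves linear (in)dependence; your Step 2 also spells out $\Phi|_{S^1}=C$, which the paper only announces before the lemma.

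There is one harmless slip. Since $\ell_{Gh}(Z)=\det(G)\,\ell_h(G^{-1}Z)$, the induced substitution is $Z\mapsto G_{\mathrm C}^{-1}Z$ on column vectors, i.e.\ $(Z_0,Z_1)\mapsto(Z_0,Z_1)(G_{\mathrm C}^{-1})^{T}$, not $(Z_0,Z_1)G_{\mathrm C}^{-1}$; the latter is how the coefficient row of $\ell_h$ transforms. Your argument only uses the existence of a single invertible substitution common to all forms, so the conclusion stands.
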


\begin{proof}
The displayed formula for $(n_x,n_y)$ gives
\[
 n_x+in_y=\frac{a^2-b^2+2iab}{a^2+b^2}
           =\frac{a+ib}{a-ib},
\]
which proves \eqref{eq:cayleyexact}.  The determinant of $G_{\mathrm C}$ is
$-2i$.
The corresponding contragredient change of variables therefore induces an
invertible linear map on the three-dimensional vector space of binary
quadratics.  Applying the same map to all three polynomials preserves both
linear dependence and linear independence.
\end{proof}

We have now fixed the causal and projective input once and for all.  For the
explicit family, every ordered pair will therefore produce a unique root in
one common projective coordinate, and dependence may be tested by an
ordinary real coefficient determinant.

\section{The explicit three-worldline family}
\label{sec:family}

We now verify the causal properties of the family
\eqref{eq:line1}--\eqref{eq:line3}.  Only the temporal placement of the
third line varies; the velocities and spatial tracks remain fixed.  The
parameter therefore isolates the effect of retarded timing from collisions
and changes of velocity.  For $T\in I$, let $\xi_i$ be
\eqref{eq:line1}--\eqref{eq:line3} with $T$ in place of $T_*$, and let
$x_i=\xi_i(0)$.  The three marked events are distinct because their spatial
coordinates are distinct.  The direction vectors are
\begin{equation}\label{eq:old-directions}
 U_1=(1,0,0),\qquad U_2=\left(1,\frac12,-\frac1{10}\right),
 \qquad U_3=\left(1,\frac45,-\frac15\right).
\end{equation}
These vectors are future directed and
\begin{equation}\label{eq:timelike}
 \ip{U_1}{U_1}=-1,\qquad
 \ip{U_2}{U_2}=-\frac{37}{50},\qquad
 \ip{U_3}{U_3}=-\frac8{25}.
\end{equation}
The spatial speeds are $0$, $\sqrt{26}/10$, and $\sqrt{17}/5$; each is
strictly less than one.
Reparametrizing each line by proper time changes neither the underlying
worldline nor any celestial direction.

\begin{proposition}[Causal admissibility of the family]\label{prop:admissible}
For every $T\in I$, the three complete affine lines $\xi_1,\xi_2,\xi_3$
are pairwise disjoint.  Hence each of the six ordered pairs has a unique
retarded intersection.
\end{proposition}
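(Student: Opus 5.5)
The plan is to prove disjointness pair by pair by writing out the intersection equations. Uniqueness of the retarded intersections then comes from Lemma~\ref{lem:retarded}. Two of the three pairs do not involve $T$ at all; the third pair meets for exactly one value of $T$, and I will check that this value lies outside $I$.

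\textbf{Pairs $(1,2)$ and $(1,3)$.} Since $\xi_1$ stays at the spatial origin, it is enough to show that the spatial tracks of $\xi_2$ and $\xi_3$ never pass through the origin. For $\xi_2$, the $x$-coordinate vanishes only at $r=\tfrac12$. There the $y$-coordinate is $\tfrac15-\tfrac1{20}=\tfrac3{20}\ne0$. For $\xi_3$, the $x$-coordinate vanishes only at $r=-\tfrac54$. There the $y$-coordinate is $\tfrac18+\tfrac14=\tfrac38\ne0$. So neither line ever meets $\xi_1$, whatever the timing, and in particular for every $T$.

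\textbf{Pair $(2,3)$.} Write the parameters as $s$ on $\xi_2$ and $u$ on $\xi_3$, and equate all three coordinates. The $y$-equation $\tfrac15-\tfrac s{10}=\tfrac18-\tfrac u5$ gives $s=\tfrac34+2u$. Substituting into the $x$-equation $-\tfrac14+\tfrac s2=1+\tfrac{4u}5$ gives $\tfrac u5=\tfrac78$. Hence $u=\tfrac{35}8$ and $s=\tfrac{19}2$, so the spatial tracks cross at exactly one point. The time equation $-\tfrac23+s=T+u$ then holds only when
\[
T=\tfrac{19}2-\tfrac{35}8-\tfrac23=\tfrac{107}{24}>\tfrac9{10}.
\]
This value is not in $I$, so $\xi_2$ and $\xi_3$ are disjoint for every $T\in I$.

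\textbf{Retarded intersections.} Every direction vector in \eqref{eq:old-directions} is future directed and timelike by \eqref{eq:timelike}. For $i\ne j$, the marked event $x_i$ lies on $\xi_i$, which is disjoint from $\xi_j$, so $x_i\notin\xi_j(\R)$. Lemma~\ref{lem:retarded} then gives a unique point of $\xi_j$ on the past light cone of $x_i$, for each of the six ordered pairs. None of these steps is a real obstacle; the only care needed is the exact arithmetic for the pair $(2,3)$, which shows that the single collision time $107/24$ is far from $I$.
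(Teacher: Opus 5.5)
Your proof is correct and follows the paper's argument essentially line for line. It uses the same spatial-track checks against $\xi_1$ ($r=\tfrac12$ giving $\tfrac3{20}$, and $r=-\tfrac54$ giving $\tfrac38$), the same unique spatial crossing of $\xi_2,\xi_3$ at parameters $\tfrac{19}{2},\tfrac{35}{8}$ with collision time $T=\tfrac{107}{24}\notin I$, and the same appeal to Lemma~\ref{lem:retarded} for the six unique retarded points.
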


\begin{proof}
An intersection of $\xi_1$ and $\xi_2$ would require both spatial
coordinates of $\xi_2(r)$ to vanish.  The first forces $r=1/2$, whereas
the second then equals $3/20$.  An intersection of $\xi_1$ and $\xi_3$
would force $r=-5/4$ from the first spatial coordinate, whereas the second
would then equal $3/8$.

For $\xi_2(r)$ and $\xi_3(s)$, equality of the two spatial coordinates has
the equations
\[
 -\frac14+\frac r2=1+\frac{4s}{5},
 \qquad
 \frac15-\frac r{10}=\frac18-\frac s5,
\]
or, after clearing denominators,
\[
 10r-16s=25,\qquad 4r-8s=3.
\]
The second equation gives $r=2s+3/4$; substitution in the first gives
\[
 4s+\frac{15}{2}=25,\qquad
 s=\frac{35}{8},\qquad r=\frac{19}{2}.
\]
At this unique candidate pair of parameters, the two time coordinates are
\[
 -\frac23+\frac{19}{2}=\frac{53}{6},
 \qquad T+\frac{35}{8}.
\]
Equality of the time coordinates would force
\[
 T=\frac{53}{6}-\frac{35}{8}=\frac{107}{24}>\frac9{10}.
\]
Thus the lines are pairwise disjoint,
and Lemma~\ref{lem:retarded} supplies the six unique retarded points.
\end{proof}

\section{From six retarded roots to one determinant}\label{sec:lifts}

This section performs the algebraic reduction that drives the proof.  We
first express the coefficient determinant directly in terms of the six
projective roots, then compute those roots from the retarded-intersection
formula, and finally verify that the resulting determinant is a smooth real
function throughout the parameter interval.

\subsection{The six-bracket identity}

We begin by reducing linear dependence to one explicit determinant.  For a nonzero vector
$a=(a_0,a_1)\in\C^2$, write
\[
 L_a(Z_0,Z_1)=a_0Z_1-a_1Z_0,
\]
and, for nonzero $a,b\in\C^2$, set $p_{a,b}=L_a L_b$.  Since
$L_{h_{ij}}=-\ell_{ij}$, the two signs cancel and, for $n=3$,
$p_{h_{ij},h_{ik}}$ is exactly $\beta_i$.
Its coefficient row in the ordered basis
$(Z_0^2,Z_0Z_1,Z_1^2)$ is
\begin{equation}\label{eq:coefficientrow}
 r(a,b)=\bigl(a_1b_1,-a_0b_1-a_1b_0,a_0b_0\bigr).
\end{equation}
Write $[a,b]=a_0b_1-a_1b_0$.

\begin{lemma}[Bracket identity]\label{lem:bracket}
For $a,b,c,d,e,f\in\C^2$,
\begin{equation}\label{eq:bracketidentity}
 \det\!\begin{pmatrix}r(a,b)\\r(c,d)\\r(e,f)\end{pmatrix}
 =[a,d][b,f][c,e]+[a,e][b,c][d,f].
\end{equation}
\end{lemma}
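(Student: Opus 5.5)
Both sides of \eqref{eq:bracketidentity} are multilinear in the six vectors $a,b,c,d,e,f$, so I would verify the identity on basis vectors rather than expand it symbolically. By \eqref{eq:coefficientrow}, each row $r(a,b)$ is bilinear in $(a,b)$, and the determinant is linear in each row, so the left side is linear in each of the six arguments separately. On the right side, each of the six vectors occurs in exactly one bracket of each product, and $[\cdot,\cdot]$ is bilinear. Hence it suffices to check the identity when each of $a,\dots,f$ is one of the standard basis vectors $\epsilon_0=(1,0)$ or $\epsilon_1=(0,1)$.

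The $2^6=64$ basis cases can be cut down by a weight argument. Let the diagonal torus act by $(v_0,v_1)\mapsto(sv_0,tv_1)$. Consider a basis assignment with $m$ slots equal to $\epsilon_0$ and $6-m$ slots equal to $\epsilon_1$.
\begin{itemize}
\item On the right side, every bracket $[\epsilon_i,\epsilon_j]$ is zero unless $i\ne j$. A nonzero product of three brackets therefore uses $\epsilon_0$ exactly three times, so the right side vanishes unless $m=3$.
\item On the left side, a row $r(a,b)$ with both entries equal to $\epsilon_0$ is $(0,0,1)$. With one entry $\epsilon_0$ it is $(0,-1,0)$, and with none it is $(1,0,0)$.
\item So each row is a signed standard basis vector, indexed by how many $\epsilon_0$'s the pair contains. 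The determinant is nonzero only if the three rows realise the three counts $2,1,0$ once each, which again forces $m=3$.
\end{itemize}
Only the $\binom63=20$ assignments with exactly three $\epsilon_0$'s remain.

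Among these 20, the left side vanishes unless the pairs $(a,b),(c,d),(e,f)$ carry $\epsilon_0$-counts that are a permutation of $(2,1,0)$. There are $3!=6$ choices of which pair gets which count. For the pair with count $1$ there are two choices of which slot holds $\epsilon_0$, giving $12$ assignments. In each of these the left side is $\pm1$: it is the sign of the permutation of rows, times $-1$ from the middle row $(0,-1,0)$. For each of the 20 assignments I would evaluate the two products on the right, using $[\epsilon_0,\epsilon_1]=1$ and $[\epsilon_1,\epsilon_0]=-1$, and compare.

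I expect the main obstacle to be bookkeeping rather than anything conceptual. Two checks need care: that the $8$ assignments on which the left side vanishes also make the right side cancel, and that the signs agree on the other $12$. I would organise this with a table indexed by the triple of pair-counts and use the symmetry $r(a,b)=r(b,a)$ to halve the work. A cleaner alternative, which I would try first, is an invariance argument:
\begin{enumerate}
\item Both sides transform by $\det(g)^3$ under $g\in\operatorname{GL}(2,\C)$ acting on all six vectors. For the left side this holds because $\operatorname{Sym}^2$ of the standard representation has determinant $\det^3$.
\item Both sides are alternating under permutation of the three pairs.
\end{enumerate}
A single normalised evaluation, for example $a=b=\epsilon_0$, $c=\epsilon_0$, $d=\epsilon_1$, $e=f=\epsilon_1$, then pins down the constant. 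That last step implicitly uses the dimension of the relevant invariant space. Since I am not certain that space is one-dimensional, I would fall back on the 20-case check to finish.
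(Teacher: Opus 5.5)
Your proof is correct, but it takes a different route from the paper. You use multilinearity to reduce to standard basis vectors. The torus weights then cut the $64$ cases to $20$: twelve $\pm1$ sign checks and eight cancellations, and these do all work out.

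Your weight count uses only the diagonal part of the symmetry that the paper uses in full. In the paper, both sides scale by $\det(G)^3$ under a common $G\in\operatorname{GL}(2,\C)$. For the left side this comes from $L_{Ga}=\det(G)\,L_a\circ G^{-1}$ together with $\det\operatorname{Sym}^2(G^{-1})=\det(G)^{-3}$. So on the dense set $[a,b]\ne0$ one may take $a=\epsilon_0$ and $b=\epsilon_1$. The first row becomes $(0,-1,0)$, and the determinant collapses to $c_1d_1e_0f_0-c_0d_0e_1f_1$ for arbitrary $c,d,e,f$. The right side reduces to the same expression, and density finishes the proof.

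Your route is purely combinatorial: it needs neither the transformation law of the left side nor a density argument, but it costs a case table. The paper replaces the table with one normalization and a two-line expansion.

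Your hesitation about the shortcut is unnecessary. A multilinear form that is symmetric within each pair and alternating across the pairs factors through $\Lambda^3(\operatorname{Sym}^2\C^2)$, which is one-dimensional.
\begin{itemize}
\item The left side obviously has both properties.
\item The right side is alternating across the pairs by direct check.
\item Its symmetry under $a\leftrightarrow b$ follows from the Pl\"ucker relations $[a,d][b,f]-[a,f][b,d]=[a,b][d,f]$ and $[a,e][b,c]-[a,c][b,e]=-[a,b][c,e]$. The alternation then transfers this symmetry to every pair.
\end{itemize}
So your single evaluation at $a=b=c=\epsilon_0$, $d=e=f=\epsilon_1$, where both sides equal $1$, would already prove the lemma.
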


\begin{proof}
Both sides are polynomial in the six vectors.  Under a common
$G\in\operatorname{GL}(2,\C)$, the right-hand side is multiplied by
$\det(G)^3$, one factor for each bracket.  For the left-hand side,
write $Z=(Z_0,Z_1)$ and observe that
\[
 L_{Ga}(Z)=\det(G)L_a(G^{-1}Z)
\]
gives
\[
 p_{Ga,Gb}(Z)=\det(G)^2p_{a,b}(G^{-1}Z).
\]
Thus each coefficient row acquires the scalar $\det(G)^2$, while all three
rows undergo the same change of variables
$\operatorname{Sym}^2(G^{-1})$.  Their determinant is therefore multiplied
by
\[
 \det(G)^6\det\operatorname{Sym}^2(G^{-1})
 =\det(G)^6\det(G)^{-3}=\det(G)^3.
\]
On the dense set $[a,b]\ne0$, choose
$G\in\operatorname{GL}(2,\C)$ sending $a$ and $b$ to the standard basis.
Because both sides acquire the same nonzero factor, it is enough to verify
the identity after this transformation.  For $a=(1,0)$ and $b=(0,1)$ one
has $r(a,b)=(0,-1,0)$, and the determinant on the left becomes
\[
 c_1d_1e_0f_0-c_0d_0e_1f_1.
\]
The right-hand side gives the same expression.  Since both sides are
polynomial in the six vectors, equality on this dense set proves the
identity everywhere.
\end{proof}

Apply the lemma with
$(a,b,c,d,e,f)=(h_{12},h_{13},h_{21},h_{23},h_{31},h_{32})$.  The three
rows are the coefficient rows of $\beta_1,\beta_2,\beta_3$, so these
quadratics are dependent exactly when
\begin{equation}\label{eq:Delta}
 \Delta_h=[h_{12},h_{23}][h_{13},h_{32}][h_{21},h_{31}]
       +[h_{12},h_{31}][h_{13},h_{21}][h_{23},h_{32}]
\end{equation}
vanishes.  Rescaling the six lifts multiplies $\Delta_h$ by the product of
the six scale factors.  In particular, positive rescaling preserves both
its zero set and its sign.

Equation~\eqref{eq:Delta} is the central algebraic bridge in the paper:
it reduces Theorem~\ref{thm:main} to the concrete task of computing the six roots in the common Cayley coordinate and showing that
the two triple products cancel for some $T\in I$.

\subsection{Retarded parameters and exact root lifts}

All square roots below denote positive real roots.  To keep
the radicands readable, set
\begin{align}
 P_1(T)&=1088T^2-2480T+1481,\label{eq:P1}\\
 P_2(T)&=244800T^2-404400T+173129,\label{eq:P2}\\
 P_3(T)&=14976T^2-52896T+47963.\label{eq:P3}
\end{align}
Lemma~\ref{lem:retarded} uses the displacement from the emitter's base point
to the observation event.  In the present family these six vectors are
\begin{align*}
d_{12}&=\left(\frac23,\frac14,-\frac15\right),&
d_{13}&=\left(-T,-1,-\frac18\right),\\
d_{21}&=\left(-\frac23,-\frac14,\frac15\right),&
d_{23}&=\left(-T-\frac23,-\frac54,\frac3{40}\right),\\
d_{31}&=\left(T,1,\frac18\right),&
d_{32}&=\left(T+\frac23,\frac54,-\frac3{40}\right).
\end{align*}
Accordingly, let $A_{ij}=\ip{d_{ij}}{U_j}$,
$\kappa_j^2=-\ip{U_j}{U_j}$, and let $R_{ij}$ denote the positive number
$R$ from Lemma~\ref{lem:retarded}, namely
\[
 R_{ij}:=\sqrt{A_{ij}^2+\kappa_j^2\ip{d_{ij}}{d_{ij}}}.
\]
Substitution into the retarded formula gives the six scalar products
\begin{align*}
A_{12}
 &=-\frac23+\frac18+\frac1{50}=-\frac{313}{600},\\
A_{13}
 &=T-\frac45+\frac1{40}=\frac{40T-31}{40},\\
A_{21}
 &=\frac23,\\
A_{23}
 &=T+\frac23-1-\frac3{200}=\frac{600T-209}{600},\\
A_{31}
 &=-T,\\
A_{32}
 &=-T-\frac23+\frac58+\frac3{400}
   =-\frac{1200T+41}{1200}.
\end{align*}
The corresponding radicands are
\begin{align*}
R_{12}^2
 &=\left(-\frac{313}{600}\right)^2
   +\frac{37}{50}\left(-\frac49+\frac1{16}+\frac1{25}\right)
   =\frac{11}{576},\\
R_{13}^2
 &=\left(T-\frac{31}{40}\right)^2
   +\frac8{25}\left(-T^2+1+\frac1{64}\right)
   =\frac{P_1(T)}{1600},\\
R_{21}^2
 &=\left(\frac23\right)^2-\frac49+\frac1{16}+\frac1{25}
   =\frac{41}{400},\\
R_{23}^2
 &=\left(T-\frac{209}{600}\right)^2
   +\frac8{25}\left[-\left(T+\frac23\right)^2
                         +\frac{25}{16}+\frac9{1600}\right]
   =\frac{P_2(T)}{360000},\\
R_{31}^2
 &=T^2-T^2+1+\frac1{64}=\frac{65}{64},\\
R_{32}^2
 &=\left(T+\frac{41}{1200}\right)^2
   +\frac{37}{50}\left[-\left(T+\frac23\right)^2
                           +\frac{25}{16}+\frac9{1600}\right]
   =\frac{P_3(T)}{57600}.
\end{align*}

These scalar quantities determine the retarded parameters, but the
coefficient determinant requires actual projective lifts.  Put
\(
 \lambda_{ij}=(A_{ij}+R_{ij})/\kappa_j^2.
\)
The preceding identities give
\begin{align*}
\lambda_{12}&=\frac{-313+25\sqrt{11}}{444},&
\lambda_{13}&=\frac{5(40T-31+\sqrt{P_1})}{64},\\
\lambda_{21}&=\frac23+\frac{\sqrt{41}}{20},&
\lambda_{23}&=\frac{600T-209+\sqrt{P_2}}{192},\\
\lambda_{31}&=-T+\frac{\sqrt{65}}8,&
\lambda_{32}&=\frac{-1200T-41+5\sqrt{P_3}}{888}.
\end{align*}
Equations \eqref{eq:qdef} and \eqref{eq:lift} then give one formula for all
six projective lifts:
\begin{equation}\label{eq:huniform}
 h_{ij}=\left(d_{ij}^0-d_{ij}^x+
 \frac{A_{ij}+R_{ij}}{\kappa_j^2}(U_j^0-U_j^x),
 -d_{ij}^y-\frac{A_{ij}+R_{ij}}{\kappa_j^2}U_j^y
 \right).
\end{equation}
For example, consider the light seen by observer $1$ from worldline $2$.
The quantities above read
\[
 d_{12}=\left(\frac23,\frac14,-\frac15\right),\qquad
 \kappa_2^2=\frac{37}{50},\qquad
 A_{12}=-\frac{313}{600},\qquad
 R_{12}=\frac{\sqrt{11}}{24}.
\]
Hence
\[
 \lambda_{12}=\frac{A_{12}+R_{12}}{\kappa_2^2}
 =\frac{-313+25\sqrt{11}}{444}.
\]
Since $U_2=(1,1/2,-1/10)$, formula \eqref{eq:huniform} gives
\[
 h_{12}=\left(\frac5{12}+\frac{\lambda_{12}}2,
               \frac15+\frac{\lambda_{12}}{10}\right)
 =\frac1{888}\bigl(57+25\sqrt{11},5(\sqrt{11}+23)\bigr).
\]
Thus the six lifts are obtained uniformly through the chain
\[
 d\longmapsto(A,R)\longmapsto\lambda\longmapsto q\longmapsto h.
\]
Applying the same formula to the other ordered pairs gives the complete
list below.  In this display, and until the end of the section, we suppress
the argument $T$ in $P_1(T),P_2(T),P_3(T)$.  For each lift, the first line
records the direct substitution and the second its simplified form.

\begin{align}
h_{12}
 &=\left(\frac5{12}+\frac{\lambda_{12}}2,
          \frac15+\frac{\lambda_{12}}{10}\right)\notag\\
 &=\frac1{888}\bigl(57+25\sqrt{11},5(\sqrt{11}+23)\bigr),
 \label{eq:h12}\\
h_{13}
 &=\left(1-T+\frac{\lambda_{13}}5,
          \frac18+\frac{\lambda_{13}}5\right)\notag\\
 &=\frac1{64}\bigl(\sqrt{P_1}+33-24T,
                    \sqrt{P_1}+40T-23\bigr),\label{eq:h13}\\
h_{21}
 &=\left(-\frac5{12}+\lambda_{21},-\frac15\right)\notag\\
 &=\frac1{20}\bigl(5+\sqrt{41},-4\bigr),\label{eq:h21}\\
h_{23}
 &=\left(-T+\frac7{12}+\frac{\lambda_{23}}5,
          -\frac3{40}+\frac{\lambda_{23}}5\right)\notag\\
 &=\frac1{960}\bigl(\sqrt{P_2}+351-360T,
                     \sqrt{P_2}+600T-281\bigr),\label{eq:h23}\\
h_{31}
 &=\left(T-1+\lambda_{31},-\frac18\right)\notag\\
 &=\frac18\bigl(\sqrt{65}-8,-1\bigr),\label{eq:h31}\\
h_{32}
 &=\left(T-\frac7{12}+\frac{\lambda_{32}}2,
          \frac3{40}+\frac{\lambda_{32}}{10}\right)\notag\\
 &=\frac1{1776}\Bigl(576T+5\sqrt{P_3}-1077,\notag\\[-2pt]
 &\hspace{32mm}\sqrt{P_3}+125-240T\Bigr).\label{eq:h32}
\end{align}
Define the numerator lifts $H_{ij}$ by clearing the six positive
denominators in \eqref{eq:h12}--\eqref{eq:h32}.  Thus
\begin{equation}\label{eq:scales}
 (h_{12},h_{13},h_{21},h_{23},h_{31},h_{32})
 =\left(\frac{H_{12}}{888},\frac{H_{13}}{64},
 \frac{H_{21}}{20},\frac{H_{23}}{960},
 \frac{H_{31}}8,\frac{H_{32}}{1776}\right).
\end{equation}

\subsection{A smooth real determinant}

For the interval argument, we must verify that these formulas give
continuous, nonzero lifts throughout $I$.  We first check that every
radicand is positive.  The derivative bounds
\begin{align*}
 P_1'(T)&=2176T-2480\le P_1'(9/10)=-\frac{2608}{5}<0,\\
 P_3'(T)&=29952T-52896\le P_3'(9/10)=-\frac{129696}{5}<0
\end{align*}
show that $P_1$ and $P_3$ are decreasing on $I$.  Moreover,
$P_2''(T)=489600>0$ and
\[
 P_2'(T)=0\quad\Longleftrightarrow\quad T=\frac{337}{408}\in I.
\]
Their minima on $I$ are therefore
\begin{equation}\label{eq:positiveP}
 P_1(9/10)=\frac{3257}{25},\qquad
 P_2(337/408)=\frac{103968}{17},\qquad
 P_3(9/10)=\frac{312179}{25}.
\end{equation}
It remains to check that none of the six lifts vanishes.  For $H_{13}$ and
$H_{23}$, the first coordinates satisfy
\begin{align*}
 H_{13}^0&\ge \sqrt{P_1(T)}+33-24(9/10)>\frac{57}{5},\\
 H_{23}^0&\ge \sqrt{P_2(T)}+351-360(9/10)>27.
\end{align*}
The second coordinate of $H_{13}$ also satisfies
\[
 H_{13}^1=\sqrt{P_1(T)}+40T-23
 >40(3/4)-23=7.
\]
For $H_{32}$, the inequality
$111^2<312179/25$ gives
\[
 H^1_{32}\ge \sqrt{312179/25}+125-240(9/10)>20.
\]
Finally, both coordinates of $H_{12}$ are positive, while
$H_{21}^1=-4$ and $H_{31}^1=-1$.  Hence none of the six vectors vanishes.

We now compare the coefficient matrices associated with the two choices of
lifts.  Let $M_h(T)$ be the matrix whose rows are
\[
 r(h_{12},h_{13}),\qquad r(h_{21},h_{23}),\qquad
 r(h_{31},h_{32}),
\]
and define $M_H(T)$ analogously from the numerator lifts $H_{ij}$.  Put
$\Delta_h=\det M_h$ and $\Delta_H=\det M_H$.  These are smooth real
functions on $I$.  Multilinearity and \eqref{eq:scales} give
\[
 \Delta_H=(888)(64)(20)(960)(8)(1776)\,\Delta_h.
\]
The factor is positive and constant.  Thus the two determinants have the
same zeros and the same sign, and a zero is simple for one if and only if it
is simple for the other.

\section{A unique simple determinant crossing}\label{sec:crossing}

The next proposition concerns the determinant $\Delta_H=\det M_H$ and
supplies the determinant zero, monotonicity, and root separation needed to
identify that zero as a nondegenerate rank-two crossing.  It is proved in
Appendix~\ref{sec:interval-proof} by exact rational interval estimates.

\begin{proposition}[Exact crossing and separation of the six roots]
\label{prop:crossing}
With $T_-,T_+,J$ as in Section~\ref{sec:intro}, one has
\[
 \Delta_H(T_-)>0,
 \qquad
 \Delta_H(T_+)<0,
 \qquad
 \Delta_H'(T)<0\quad(T\in J).
\]
Moreover, every bracket between two distinct vectors among
\[
 H_{12}(T),H_{13}(T),H_{21}(T),H_{23}(T),H_{31}(T),H_{32}(T)
\]
is nonzero for every $T\in J$.
\end{proposition}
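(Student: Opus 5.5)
The proof is a rigorous numerical verification by exact rational interval arithmetic. It rests on the closed forms \eqref{eq:h12}--\eqref{eq:h32} and the six-bracket expression for $\Delta_H$ from Lemma~\ref{lem:bracket}. Every coordinate of every $H_{ij}(T)$ is affine in $T$ and in a single radical: either a constant $\sqrt{11},\sqrt{41},\sqrt{65}$ or one of $\sqrt{P_k(T)}$.

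\textbf{Step 1: enclose the radicals.} For the three fixed radicals I would use rational enclosures, for instance
\[
\tfrac{331662}{10^5}<\sqrt{11}<\tfrac{331663}{10^5},
\]
each certified by squaring the endpoints. On $J$, the monotonicity facts from Section~\ref{sec:lifts} give enclosures for the $\sqrt{P_k(T)}$: $P_1$ and $P_3$ decrease on $I$, and $P_2$ has its vertex at $337/408\approx0.826$, below $T_-$, so it increases on $J$. Hence each $\sqrt{P_k}$ is enclosed on all of $J$ by rational bounds on $\sqrt{P_k(T_\pm)}$, again certified by squaring.

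\textbf{Step 2: endpoint signs.} At the exact rational values $T_\pm$, substitute the radical enclosures into the fifteen relevant brackets and form the two triple products of \eqref{eq:intro-delta}. The sign of $\Delta_H(T_\pm)$ follows once the interval enclosure excludes $0$. Since $T_*$ lies within $10^{-5}$ of both endpoints, $|\Delta_H(T_\pm)|$ will be small, of order $10^{-5}|\Delta_H'|$. The radical enclosures therefore need many digits; I would take $10^{-12}$ or better, or isolate the two triple products and compare them directly.

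\textbf{Step 3: derivative bound (the main obstacle).} Differentiate $\Delta_H$ by the product rule. Each factor's derivative is explicit, using
\[
\frac{d}{dT}\sqrt{P_k}=\frac{P_k'}{2\sqrt{P_k}}.
\]
Bound every bracket and its derivative uniformly over $J$ by naive interval arithmetic, using the enclosures of Step 1 together with $T\in J$. Since $J$ has width $10^{-5}$, the overestimation of naive interval arithmetic is negligible, so an upper bound $\Delta_H'(T)<0$ on all of $J$ should follow directly. The risk is the cancellation of the two triple products in $\Delta_H$; but $\Delta_H'$ is expected to be of order one and not near-cancelling, given that the zero is simple, so this step is routine but long. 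If needed, I would split $J$ into a few subintervals.

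\textbf{Step 4: root separation.} For the fifteen brackets, note that each bracket is a continuous function of $T$. Enclose its range over $J$ using the Step 1 enclosures, and check that each range excludes $0$. Generic values make this easy; the expected margins are of order $10^{-1}$.

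All the arithmetic in Steps 2--4 is exact arithmetic on rationals, so the whole appendix reduces to a finite list of certified rational inequalities.
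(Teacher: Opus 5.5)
Your proposal is correct and is essentially the paper's own argument in the appendix. The paper uses the same four steps: uniform radical enclosures on $J$ from the monotonicity of $P_1,P_2,P_3$; sharper enclosures at the exact endpoints $T_\pm$ to fix the sign of $\Delta_H=B_1B_2B_3+B_4B_5B_6$; a uniform interval bound on $\Delta_H'$ via the product rule and $s_\nu'=P_\nu'/(2s_\nu)$; and uniform interval bounds showing the fifteen brackets avoid zero.

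Some calibration details are off, though none affects the method:
\begin{itemize}
\item Only six brackets enter $\Delta_H$, not fifteen.
\item In the numerator normalization each triple product is about $3.2\times10^{8}$ in size, and $\Delta_H'\approx-2\times10^{9}$, not order one.
\item The smallest bracket margin is $[H_{31},H_{32}]>3$, not order $10^{-1}$.
\item Radical enclosures of width $10^{-6}$ at $T_\pm$ already suffice for the endpoint signs, as in the paper, so $10^{-12}$ is far more than needed.
\end{itemize}
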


\begin{proof}[Proof of Theorem~\ref{thm:main}]
The strict endpoint signs in Proposition~\ref{prop:crossing} and the
intermediate value theorem give a zero $T_*\in(T_-,T_+)$.  Since
$\Delta_H'<0$ throughout $J$, the function is strictly decreasing there;
the zero is unique in $J$, and $\Delta_H'(T_*)\ne0$.  The positive constant
relating $\Delta_H$ and $\Delta_h$ gives the same conclusions for
$\Delta_h$.

The simplicity statement is independent of the chosen smooth nonzero lift
normalization.  Indeed, independently rescaling the six lifts by smooth
nonvanishing functions multiplies the determinant by their product, so the
new determinant has the form
$\widetilde\Delta(T)=g(T)\Delta_h(T)$ with $g(T)\ne0$.  At a zero of
$\Delta_h$ one therefore has
\[
 \widetilde\Delta'(T_*)=g(T_*)\Delta_h'(T_*).
\]
A fixed common projective coordinate change contributes only an additional
nonzero constant factor to the coefficient determinant and likewise
preserves simplicity.

Proposition~\ref{prop:admissible} and \eqref{eq:timelike} show that all three
worldlines are complete, pairwise disjoint, future directed, and timelike
throughout $[3/4,9/10]$; the spatial speeds computed after
\eqref{eq:timelike} are strictly subluminal.  Lemma~\ref{lem:retarded} gives
the six unique retarded intersections.  By the final assertion of
Proposition~\ref{prop:crossing}, the six projective roots at $T_*$ are
pairwise distinct.  In particular, the three two-element root multisets are
disjoint.  Proportional nonzero homogeneous quadratics have the same root
multiset, with multiplicities; hence no two rows of the coefficient matrix
are proportional.  The matrix is singular but has rank at least two, and
therefore has rank exactly two.

At a rank-two matrix $M$, the adjugate $\operatorname{adj}(M)$ is nonzero,
so $d(\det)_M(E)=\operatorname{tr}(\operatorname{adj}(M)E)$ is a nonzero
linear functional.  Thus the determinant-zero set in $M_3(\R)$ is a smooth
hypersurface near $M_H(T_*)$.  Since
\[
 \frac{d}{dT}\det M_H(T)\bigg|_{T=T_*}=\Delta_H'(T_*)\ne0,
\]
the path $T\mapsto M_H(T)$ meets that hypersurface transversely.  Finally,
the common
$\operatorname{GL}(2,\C)$ transformation in
Lemma~\ref{lem:atiyahcoordinate} preserves linear dependence and identifies
the same counterexample in the standard stereographic coordinate.
\end{proof}

\section{Null-translation stabilization at every particle number}
\label{sec:stabilization}

The three-line crossing is the essential counterexample.  To obtain a
separate counterexample at each prescribed particle number, we now build a
geometric mechanism that preserves the three-term relation while adding
worldlines.  The argument has three parts: an algebraic common-factor
lemma, a general Lorentzian realization, and an application of the general
criterion to the explicit three-line core.

\subsection{The common-factor mechanism}

To distinguish the three-worldline forms from those of an enlarged
configuration, write
\begin{equation}\label{eq:beta-superscript}
 \beta_i^{(m)}=
 \prod_{\substack{1\le j\le m\\ j\ne i}}\ell_{ij}
 \in\operatorname{Sym}^{m-1}((\C^2)^*).
\end{equation}

\begin{lemma}[Common-factor preservation]\label{lem:common-factor}
Let $N>3$.  Suppose an admissible three-worldline configuration satisfies the
nontrivial relation
\begin{equation}\label{eq:three-relation}
 \lambda_1\beta_1^{(3)}+
 \lambda_2\beta_2^{(3)}+
 \lambda_3\beta_3^{(3)}=0.
\end{equation}
If these three worldlines are contained in an admissible
$N$-worldline configuration and, for one celestial direction $u_\star$,
\[
 u_{ik}=u_\star,
 \qquad i\in\{1,2,3\},\quad k\in\{4,\ldots,N\},
\]
then the $N$ forms are linearly dependent.
\end{lemma}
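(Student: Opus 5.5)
The plan is to write down the common factor explicitly and push the three-term relation through it. Fix one lift $h_\star$ of $\Phi(u_\star)$ and let $L_\star=b_\star Z_0-a_\star Z_1$ be the corresponding linear factor. For each old observer $i\in\{1,2,3\}$ and each new emitter $k\ge4$ we have $u_{ik}=u_\star$. The construction in \eqref{eq:retarded-forms} permits any lift of $\Phi(u_{ik})$, and any such lift is a nonzero multiple of $h_\star$. Hence $\ell_{ik}=c_{ik}L_\star$ with $c_{ik}\in\C\setminus\{0\}$. The definition \eqref{eq:beta-superscript} splits the product over $j\ne i$ into the indices $j\le3$ and the indices $k\ge4$. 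This gives
\[
 \beta_i^{(N)}=\Bigl(\prod_{k=4}^{N}c_{ik}\Bigr)L_\star^{N-3}\,\beta_i^{(3)}=:\mu_i L_\star^{N-3}\beta_i^{(3)},\qquad \mu_i\ne0,\quad i=1,2,3.
\]
The factors $\ell_{ij}$ with $j\le3$ are the same in both configurations. This is because the three old worldlines, marked events and retarded roots $u_{ij}$ are unchanged by the admissible enlargement. The remaining freedom is the choice of lifts, which only rescales by nonzero constants, so I fold it into $\beta_i^{(3)}$.

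Next I would set $\nu_i=\lambda_i/\mu_i$ for $i\le3$ and $\nu_k=0$ for $k\ge4$. Multiplying \eqref{eq:three-relation} by $L_\star^{N-3}$ gives
\[
 \sum_{i=1}^{N}\nu_i\beta_i^{(N)}=L_\star^{N-3}\sum_{i=1}^{3}\lambda_i\beta_i^{(3)}=0
\]
in $\operatorname{Sym}^{N-1}((\C^2)^*)$. Some $\lambda_i$ is nonzero and the corresponding $\mu_i$ is nonzero, so $(\nu_1,\ldots,\nu_N)\ne0$. This is a nontrivial linear relation among the $N$ forms.

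No step is a real obstacle. The only point needing care is bookkeeping: one must check that the three-line relation, which was stated for some choice of lifts, transfers to the lifts used in the $N$-configuration. That is exactly why the nonzero scalars $\mu_i$ are kept explicit rather than assuming the lifts agree. Two further facts are worth stating. First, multiplication by the nonzero polynomial $L_\star^{N-3}$ is linear, so it preserves the relation. Second, the product $\beta_i^{(N)}$ is well defined because admissibility makes every $u_{ij}$ unique.
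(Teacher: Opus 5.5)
Your proof is correct and follows the paper's argument: factor $L_\star^{N-3}$ out of $\beta_1^{(N)},\beta_2^{(N)},\beta_3^{(N)}$, multiply the three-term relation by it, and put zero coefficients on the new forms. The only difference is bookkeeping. You carry the lift rescalings explicitly in the scalars $\mu_i$, whereas the paper first fixes compatible lifts and then notes that arbitrary lifts merely rescale the relation's coefficients. Any rescaling of the old factors should also go into $\mu_i$ rather than into $\beta_i^{(3)}$, so that the $\lambda_i$ still refer to the forms in the given relation.
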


\begin{proof}
Choose one nonzero linear factor $L_\star$ vanishing at
$\Phi(u_\star)$.  Retain the six old lifts and use $L_\star$ for every
occurrence of the common new root.  With these choices, the identities
\[
 \beta_i^{(N)}=L_\star^{N-3}\beta_i^{(3)},
 \qquad i=1,2,3,
\]
hold exactly.  Multiplication of \eqref{eq:three-relation} by $L_\star^{N-3}$ gives a
nontrivial relation among the $N$ forms, with zero coefficients on the new
ones.  For arbitrary lift choices, each form is merely rescaled by a nonzero
number, and the coefficients of the relation may be rescaled inversely.
Thus the conclusion is intrinsic.
\end{proof}

\subsection{A general null-translation realization}

The common-factor argument has a geometric realization that is independent
of the particular numerical example.  Translating all worldlines and marked
events by the same vector leaves every retarded displacement, and hence
every celestial root, unchanged.  We may therefore formulate the criterion
using a linear two-plane through the origin.

\begin{proposition}[Null-translation stabilization principle]
\label{prop:null-translation}
Let $N>3$, and let $\xi_i(r)=x_i+rU_i$, $i=1,2,3$, be pairwise disjoint
complete timelike affine lines, with each $U_i$ future directed, whose
associated forms are dependent.  Suppose that the marked events lie in the
two-plane
\[
 \Pi=\operatorname{span}_{\R}\{U_\star,\omega\},
\]
where $U_\star$ is future-directed timelike and $\omega$ is a nonzero
past-directed null vector, and
suppose $U_i\notin\Pi$ for $i=1,2,3$.  Write
\[
 x_i=\alpha_i U_\star+\rho_i\omega.
\]
For any distinct real numbers $c_4,\ldots,c_N$ satisfying
$c_k>\max_i\rho_i$, the additional lines
\begin{equation}\label{eq:null-translation-general}
 \xi_k(r)=rU_\star+c_k\omega,
 \qquad x_k=c_k\omega,
 \qquad 4\le k\le N,
\end{equation}
can be adjoined to the original three lines to produce an admissible
$N$-worldline configuration with linearly dependent forms.
\end{proposition}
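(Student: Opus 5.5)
The plan is to verify the hypotheses of Lemma~\ref{lem:common-factor} directly. That means showing two things: that the enlarged family is an admissible configuration of pairwise disjoint complete timelike affine lines, and that every old observer $i\le3$ sees every new emitter $k\ge4$ in the same celestial direction $[\omega]_+$. Admissibility is essentially free once disjointness is known, because Lemma~\ref{lem:retarded} gives a unique retarded point for any observation event off a complete timelike affine emitter. So the work reduces to three elementary linear-algebra checks in the plane $\Pi$.

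\emph{Step 1: $\Pi$ is a genuine two-plane.} First I note that $U_\star$ and $\omega$ are linearly independent, since a nonzero multiple of a timelike vector is never null. Hence $\Pi$ is two-dimensional, and the coordinates $(\alpha_i,\rho_i)$ of each $x_i$ are uniquely determined.

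\emph{Step 2: disjointness.}
\begin{itemize}
\item For two new lines, an equation $rU_\star+c_k\omega=sU_\star+c_l\omega$ gives $(r-s)U_\star=(c_l-c_k)\omega$. Independence then forces $c_k=c_l$, which contradicts distinctness.
\item For an old line $\xi_i$ and a new line $\xi_k$, an intersection $x_i+rU_i=sU_\star+c_k\omega$ gives $rU_i\in\Pi$, because $x_i\in\Pi$. Since $U_i\notin\Pi$, this forces $r=0$, so $x_i=sU_\star+c_k\omega$. Uniqueness of coordinates then gives $\rho_i=c_k$, contradicting $c_k>\max_i\rho_i$.
\item Old lines are disjoint by hypothesis.
\item The new direction $U_\star$ is future timelike.
\end{itemize}
Lemma~\ref{lem:retarded} therefore makes every ordered pair admissible. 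The marked events are automatically distinct because the lines are disjoint.

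\emph{Step 3: the common root.} For $i\le3$ and $k\ge4$, I evaluate the emitter at the parameter $r=\alpha_i$:
\[
 \xi_k(\alpha_i)-x_i=(c_k-\rho_i)\,\omega .
\]
This is a positive multiple of the past-directed null vector $\omega$, so $\xi_k(\alpha_i)$ lies on the past light cone of $x_i$. By the uniqueness in Lemma~\ref{lem:retarded} it is the retarded point $y_{ik}$. Hence $u_{ik}=[\omega]_+=:u_\star$ for all such $i,k$. Lemma~\ref{lem:common-factor}, applied to the dependent triple $\beta^{(3)}_1,\beta^{(3)}_2,\beta^{(3)}_3$, then yields linear dependence of the $N$ forms.

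The step that needs the most care is Step~3. The key observation is that translating along $U_\star$ lets one land exactly on $x_i$ plus a multiple of $\omega$. The positivity $c_k-\rho_i>0$ is exactly what makes that displacement past-directed, so the point is the retarded intersection rather than the advanced one. The same inequality is what rules out old–new collisions in Step~2.
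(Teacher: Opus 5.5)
Your proposal is correct and follows the paper's proof essentially step for step. The steps match: independence of $U_\star$ and $\omega$; disjointness via the coordinates in $\Pi$ together with $U_i\notin\Pi$; the explicit point $\alpha_iU_\star+c_k\omega\in\xi_k$, whose displacement $(c_k-\rho_i)\omega$ is identified as retarded by the uniqueness in Lemma~\ref{lem:retarded}; and finally Lemma~\ref{lem:common-factor}. The only point the paper states explicitly that you leave implicit is that adjoining the new lines leaves the old marked events, the six old retarded roots, and hence the original three-term relation unchanged.
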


\begin{proof}
The vectors $U_\star$ and $\omega$ are independent, because a nonzero null
vector cannot be proportional to a timelike vector.  Hence the new parallel
lines are mutually disjoint.  Each new line lies in $\Pi$.  Since
$x_i\in\Pi$ and $U_i\notin\Pi$, the old line $x_i+\R U_i$ meets $\Pi$
only at $x_i$.  In the basis $(U_\star,\omega)$, the point $x_i$ has
$\omega$-coordinate $\rho_i$, whereas every point of the $k$th new line
has $\omega$-coordinate $c_k$.  Since $c_k>\rho_i$, the point $x_i$ does
not lie on the $k$th new line.  Any intersection of an old line with a new
one would lie in $\Pi$ and hence would have to be the marked point $x_i$;
the coordinate comparison just made rules this out.  Thus no old--new
intersection occurs.  Together with the
pairwise disjointness of the old lines and of the new lines, this proves
that all $N$ lines are pairwise disjoint.  They are complete and timelike,
so Lemma~\ref{lem:retarded} gives admissibility.

For $i\le3<k$, the point
\[
 y_{ik}=\alpha_i U_\star+c_k\omega\in\xi_k
\]
satisfies
\[
 y_{ik}-x_i=(c_k-\rho_i)\omega.
\]
Because $c_k-\rho_i>0$, this is a nonzero past-null displacement.
Uniqueness in Lemma~\ref{lem:retarded} makes it the retarded displacement,
and consequently $u_{ik}=[\omega]_+$ for every $i\le3<k$.
Adjoining the new lines does not alter any of the old worldlines, marked
events, or old--old retarded intersections.  Hence the six old factors and
the original three-term relation are unchanged.  The conclusion then follows
from Lemma~\ref{lem:common-factor}.
\end{proof}

\subsection{Application to the three-line construction}

We now verify the hypotheses for the three-line core.  Fix the parameter
$T_*$ from Theorem~\ref{thm:main}.  Its marked events are
\begin{equation}\label{eq:old-events-stabilization}
 x_1=(0,0,0),\qquad
 x_2=\left(-\frac23,-\frac14,\frac15\right),\qquad
 x_3=\left(T_*,1,\frac18\right).
\end{equation}
Set
\begin{equation}\label{eq:stabilizing-U}
 U_\star=-x_2=\left(\frac23,\frac14,-\frac15\right),
 \qquad
 \kappa_\star^2=-\ip{U_\star}{U_\star}=\frac{1231}{3600}.
\end{equation}
Thus $U_\star$ is future directed and timelike.  We next choose a null
vector in the marked-event plane.  Specifically, we solve
$\ip{x_3-\sigma U_\star}{x_3-\sigma U_\star}=0$.  Set
\begin{align}
 A_\star&=\ip{x_3}{U_\star}=\frac{27-80T_*}{120},&
 B_\star&=\ip{x_3}{x_3}=\frac{65}{64}-T_*^2,\label{eq:stabilizing-AB}\\
 \Theta_*&=23616T_*^2-69120T_*+91679,&
 R_\star&=\sqrt{A_\star^2+\kappa_\star^2B_\star}
          =\frac{\sqrt{\Theta_*}}{480},\label{eq:stabilizing-R}\\
 \sigma_\star&=\frac{R_\star-A_\star}{\kappa_\star^2},&
 \omega&=x_3-\sigma_\star U_\star.\label{eq:stabilizing-omega}
\end{align}
Because $B_\star>65/64-81/100>0$, one has $R_\star>|A_\star|$ and
$\sigma_\star>0$.

\begin{lemma}[The stabilizing plane]\label{lem:stabilizing-null}
The vector $\omega$ is nonzero, null, and past directed, and
\[
 \Pi:=\operatorname{span}_{\R}\{x_2,x_3\}
     =\operatorname{span}_{\R}\{U_\star,\omega\}.
\]
Moreover, none of the three original direction vectors lies in $\Pi$.
\end{lemma}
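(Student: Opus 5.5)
The plan is to verify the four claims in turn, using only the explicit formulas \eqref{eq:stabilizing-U}--\eqref{eq:stabilizing-omega}.

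\emph{Nullity.} By construction $\sigma_\star$ is a root of
\[
 \ip{x_3-\sigma U_\star}{x_3-\sigma U_\star}=B_\star-2A_\star\sigma-\kappa_\star^2\sigma^2 .
\]
The roots of this quadratic are $(-A_\star\pm R_\star)/\kappa_\star^2$, and $\sigma_\star$ is the one with the $+$ sign. Hence $\ip{\omega}{\omega}=0$. Before relying on this, I would recheck two pieces of routine arithmetic: $\kappa_\star^2=4/9-1/16-1/25=1231/3600$, and $R_\star^2=\Theta_*/480^2$.

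\emph{Nonzero and past directed.} A null vector is nonzero exactly when its time component is nonzero, so it suffices to show $\omega^0<0$. We have $\omega^0=T_*-\tfrac23\sigma_\star$, so the goal is $\sigma_\star>\tfrac32T_*$. Since $T_*\in J\subset(0.8295,0.8296)$, I would bound $A_\star$ and $\Theta_*$ by rational intervals and derive a lower bound for $\sigma_\star=(R_\star-A_\star)/\kappa_\star^2$.

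A rough estimate suggests this works. We get $A_\star\approx(27-66.36)/120\approx-0.328$ and $B_\star\approx1.0156-0.688\approx0.327$. Then $\Theta_*\approx16251-57336+91679\approx50594$, so $R_\star\approx225/480\approx0.469$. This gives $\sigma_\star\approx0.797/0.342\approx2.33$, comfortably larger than $1.5T_*\approx1.24$. A rigorous version should need only a crude lower bound on $\sqrt{\Theta_*}$ at the worse endpoint of $J$, together with an upper bound on $T_*$.

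\emph{The plane identity.} Since $U_\star=-x_2$ and $\omega=x_3-\sigma_\star U_\star$, both $U_\star$ and $\omega$ lie in $\operatorname{span}\{x_2,x_3\}$. Conversely, $x_2=-U_\star$ and $x_3=\omega+\sigma_\star U_\star$, so each spanning set lies in the span of the other. The vectors $x_2$ and $x_3$ are independent, since $x_2$ has spatial $x$-component $-1/4$ and $x_3$ has $1$, and $x_2\ne cx_3$ because the ratios $(-2/3)/T_*$ and $-1/4$ differ. Hence both spans are the same two-plane.

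\emph{No original direction lies in $\Pi$.} Membership of a vector $U$ in $\Pi$ is decided by the $3\times3$ determinant $\det(x_2,x_3,U)$ in $\M^{2,1}$. I would compute this determinant for $U_1,U_2,U_3$ from \eqref{eq:old-directions}, viewed as a polynomial in $T_*$:
\begin{itemize}
\item For $U_1=(1,0,0)$ it is $(-1/4)(1/8)-(1/5)(1)=-1/32-1/5\ne0$.
\item For $U_2$ and $U_3$ each determinant is affine in $T_*$, so it vanishes at most at one explicit rational value.
\end{itemize}
It then suffices to check that these rational values lie outside $J$.

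The main obstacle is the past-directedness inequality, because it is the only step that depends on the location of the irrational $T_*$. I expect it to follow from the coarse interval bounds above, with a large margin.
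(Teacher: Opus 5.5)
Your proof is correct. For nullity, the plane identity and the three determinant checks it is essentially the paper's argument. The paper obtains $-37/160$, $(36T_*-59)/480$ and $(88T_*-25)/800$, which vanish only at $59/36$ and $25/88$, both far outside $J$. Your independence check via the coordinates of $x_2,x_3$ is an equally valid substitute for the paper's remark that a nonzero null vector cannot be proportional to a timelike one.

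The genuine difference is past-directedness. You verify the definition literally, $\omega^0=T_*-\tfrac23\sigma_\star<0$, and this requires locating $T_*$ numerically. It does go through with a wide margin:
\begin{itemize}
\item $\sigma_\star>\tfrac32T_*$ is equivalent to $R_\star>(540-369T_*)/2400$.
\item The right side is below $0.225$ once $T_*>0$, so you need a lower bound on $T_*$ as well as the upper one you mention.
\item $\Theta$ is decreasing on $[0,9/10]$, and $\Theta(9/10)=48599.96>220^2$ gives $R_\star>11/24$.
\end{itemize}

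The paper avoids all numerics here. From the definition of $\sigma_\star$ it gets $\ip{U_\star}{\omega}=A_\star+\kappa_\star^2\sigma_\star=R_\star>0$. In the rest frame of the future timelike $U_\star$, a positive inner product with a null vector forces that vector to be nonzero with negative time component. So $\omega\neq0$ and $\omega$ is past directed, using only $R_\star>0$, which already follows from $B_\star>0$. This argument is coordinate-free and shows structurally that $\sigma_\star$ selects the past-cone intersection. Your computation is more elementary but ties the conclusion to the specific numbers.
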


\begin{proof}
The definition of $\sigma_\star$ and the identity
$R_\star^2=A_\star^2+\kappa_\star^2B_\star$ give
\[
 \ip{\omega}{\omega}
 =B_\star-2A_\star\sigma_\star
       -\kappa_\star^2\sigma_\star^2=0.
\]
Also
\[
 \ip{U_\star}{\omega}
 =A_\star+\kappa_\star^2\sigma_\star=R_\star>0.
\]
In the rest frame of the future timelike vector $U_\star$, this inequality
says that the time component of $\omega$ is negative.  Hence $\omega$ is
past directed, and in particular nonzero.  Since a nonzero null vector
cannot be proportional to a timelike vector, the identities
$x_2=-U_\star$ and $x_3=\sigma_\star U_\star+\omega$ prove the equality
of the two spanning planes.

For the original directions $U_i$ in \eqref{eq:old-directions}, direct
calculation gives
\begin{align*}
 \det[\,x_2\;x_3\;U_1\,]&=-\frac{37}{160},\\
 \det[\,x_2\;x_3\;U_2\,]&=\frac{36T_*-59}{480}<0,\\
 \det[\,x_2\;x_3\;U_3\,]&=\frac{88T_*-25}{800}>0.
\end{align*}
The signs use $3/4<T_*<9/10$.  Thus $U_i\notin\Pi$ for all $i$.
\end{proof}

\begin{proof}[Proof of Theorem~\ref{thm:all-n-intro}]
The case $N=3$ is Theorem~\ref{thm:main}.  Let $N>3$.  In the basis
$(U_\star,\omega)$, the three marked events are
\[
 x_1=0,
 \qquad x_2=-U_\star,
 \qquad x_3=\sigma_\star U_\star+\omega,
\]
so $(\rho_1,\rho_2,\rho_3)=(0,0,1)$.  Choose $c_k=k-2$ for
$k=4,\ldots,N$.  These numbers are distinct and satisfy
$c_k>\max_i\rho_i=1$.  Proposition~\ref{prop:null-translation} therefore
produces the required admissible $N$-worldline configuration and preserves
the three-term dependence.
\end{proof}

\begin{remark}[Genericity of the stabilized examples]
For $N>3$, the first three forms acquire the repeated common factor
$L_\star^{N-3}$.  The worldlines remain pairwise disjoint; coincident
celestial directions are permitted by Atiyah's conjecture.  A stronger
variant requiring all roots to be distinct is a different problem.  The
three-worldline core itself has six pairwise distinct roots by
Proposition~\ref{prop:crossing}.
\end{remark}

\section{Geometry of the rank defect, normalization, and discussion}\label{sec:geometry}

\subsection{Projective balance and Veronese concurrence}

For six nonzero lifts, the bracket identity \eqref{eq:Delta} has a
projective reformulation.  Whenever its second triple product is nonzero,
set
\begin{equation}\label{eq:multiratio}
 \mathcal R=
 \frac{[h_{12},h_{23}][h_{13},h_{32}][h_{21},h_{31}]}
 {[h_{12},h_{31}][h_{13},h_{21}][h_{23},h_{32}]}.
\end{equation}

\begin{proposition}[Projective balance at the crossing]
\label{prop:projective-balance}
The quantity $\mathcal R$ is invariant under independent rescaling of the
six lifts and under a simultaneous element of $\operatorname{PGL}(2,\C)$.
At the parameter $T_*$ of Theorem~\ref{thm:main},
\[
 \mathcal R(T_*)=-1.
\]
The space of linear relations among $\beta_1,\beta_2,\beta_3$ is
one-dimensional, and every nonzero relation
$c_1\beta_1+c_2\beta_2+c_3\beta_3=0$ satisfies
$c_1c_2c_3\ne0$.
\end{proposition}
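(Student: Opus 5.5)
Three things need proof: the invariance of $\mathcal R$, the value $\mathcal R(T_*)=-1$, and the structure of the relations. All of them come from Lemma~\ref{lem:bracket} and the root separation in Proposition~\ref{prop:crossing}.

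\emph{Invariance.} Each lift $h_{ij}$ appears exactly once in the numerator of \eqref{eq:multiratio} and exactly once in the denominator. Rescaling $h_{ij}\mapsto t_{ij}h_{ij}$ therefore multiplies both triple products by $\prod t_{ij}$, and $\mathcal R$ is unchanged. A common $G\in\operatorname{GL}(2,\C)$ multiplies every bracket by $\det G$, so each triple product gains $\det(G)^3$ and the ratio is again unchanged. Since scalar matrices act by rescaling, invariance descends to $\operatorname{PGL}(2,\C)$.

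\emph{The value at $T_*$.} By Proposition~\ref{prop:crossing}, all brackets between distinct $H_{ij}(T_*)$ are nonzero. The lifts $h_{ij}$ are positive multiples of the $H_{ij}$ by \eqref{eq:scales}, so both triple products in \eqref{eq:Delta} are nonzero at $T_*$ and $\mathcal R(T_*)$ is defined. The identity \eqref{eq:Delta} together with $\Delta_h(T_*)=0$ says that the first triple product equals minus the second. Dividing gives $\mathcal R(T_*)=-1$.

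\emph{Relations.} Theorem~\ref{thm:main}(iii) gives rank exactly two. The relation space is the left kernel of $M$, so it is one-dimensional. Suppose a nonzero relation has some coefficient zero, say $c_3=0$. Then $c_1\beta_1+c_2\beta_2=0$ with $(c_1,c_2)\ne(0,0)$. Since each $\beta_i$ is a nonzero quadratic, both $c_1$ and $c_2$ must be nonzero, so $\beta_1$ and $\beta_2$ are proportional. Proportional quadratics share their root multisets, so $\{u_{12},u_{13}\}=\{u_{21},u_{23}\}$. This contradicts the pairwise distinctness of the six roots (the nonvanishing brackets in Proposition~\ref{prop:crossing}). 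The cases $c_1=0$ and $c_2=0$ are symmetric. Hence $c_1c_2c_3\ne0$.

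The only point needing care is checking that the denominator of \eqref{eq:multiratio} is nonzero, which Proposition~\ref{prop:crossing} supplies; the rest is bookkeeping.
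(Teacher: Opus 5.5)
Your proposal is correct and follows essentially the same route as the paper: invariance because each lift appears once in each triple product and every bracket picks up $\det G$; $\mathcal R(T_*)=-1$ by dividing the vanishing bracket identity by the nonzero second triple product; and the relation statement from rank two plus the observation that a vanishing coefficient would force two quadratics with disjoint root multisets to be proportional. Your explicit check that the two remaining coefficients must both be nonzero, because each $\beta_i\neq0$, is a small detail the paper leaves implicit.
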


\begin{proof}
Each lift occurs once in the numerator and once in the denominator of
\eqref{eq:multiratio}; a common $G\in\operatorname{GL}(2,\C)$ multiplies
every bracket by $\det G$.  Thus $\mathcal R$ is projectively invariant.
Proposition~\ref{prop:crossing} makes all six roots pairwise distinct, so
every bracket in \eqref{eq:multiratio} is nonzero.  Dividing
\eqref{eq:Delta} by the second triple product gives
$\Delta_h=0$ if and only if $\mathcal R=-1$.

The coefficient matrix has rank two by Theorem~\ref{thm:main}, so its
relation space is one-dimensional.  If one coefficient in a nonzero
relation vanished, the remaining two quadratics would be proportional,
contrary to the fact that their root multisets are pairwise disjoint.
\end{proof}

The same rank defect has a classical projective interpretation.  Use the
natural pairing of $\operatorname{Sym}^2((\R^2)^*)$ with
$\operatorname{Sym}^2(\R^2)$.  A binary quadratic $p$ then defines the line
\[
 L_p=\{[w]\in\PP(\operatorname{Sym}^2(\R^2)):\langle p,w\rangle=0\}.
\]
Consider the Veronese map
\[
 \nu_2:\PP(\R^2)\longrightarrow\PP(\operatorname{Sym}^2(\R^2)),
 \qquad [v]\longmapsto[v\odot v],
\]
where $\odot$ denotes the symmetric product.  If $p$ has two distinct roots
$[a]$ and $[b]$, then the line $L_p$ is precisely the secant through
$\nu_2([a])$ and $\nu_2([b])$.  The rank-two conclusion therefore
says that the three secants associated with the three observers are
concurrent.

At the same parameter, the coefficient-matrix path
$T\mapsto M_H(T)$ meets the smooth rank-two stratum of $\{\det=0\}$ in
$M_3(\R)$.  Since $\Delta_H'(T_*)\ne0$, the intersection is transverse.
We make no transversality claim in the full complex ambient space, where the
determinantal divisor has real codimension two.  Thus the failure is not
caused by a collision, a lightlike limit, a repeated root, or a higher-order
tangency along the displayed real path.  Within this family, it is a rank defect produced solely by retarded timing
inside the strictly timelike inertial region.

\subsection{The normalized determinant and the affine-timelike dichotomy}

For an admissible $N$-tuple whose reciprocal roots are distinct, choose
lifts $h_{ij}$ and let $C_N$ be the matrix whose $i$th row is the coefficient
row of $\beta_i^{(N)}$ in the basis
$(Z_0^{N-1},Z_0^{N-2}Z_1,\ldots,Z_1^{N-1})$.  Define
\begin{equation}\label{eq:general-normalized-D}
 D_{\mathrm{ret}}^{(N)}=
 \frac{\det C_N}
 {\displaystyle\prod_{1\le i<j\le N}[h_{ij},h_{ji}]}.
\end{equation}
The quotient in \eqref{eq:general-normalized-D} is the retarded form of
the normalized determinant developed by Atiyah and Sutcliffe.  Their
construction divides the coefficient determinant by the product of
reciprocal-pair brackets, and they apply the same normalized function to
Minkowski retarded data \cite[\S\S3,10]{AtiyahSutcliffe}.  Our displayed
basis and bracket conventions fix one representative of this normalization;
changing those global conventions can multiply the quotient by a fixed
nonzero scalar, but cannot change its zero set.

\begin{corollary}[Normalized zeros for every $N\ge3$]
\label{cor:all-n-normalized}
For every $N\ge3$ and every configuration constructed in the proof of
Theorem~\ref{thm:all-n-intro}, the quotient
\eqref{eq:general-normalized-D} is well defined, independent of the chosen
lifts, invariant under a common $\operatorname{PGL}(2,\C)$ coordinate
change, and equal to zero.
\end{corollary}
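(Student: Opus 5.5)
The argument has three parts: first show that the denominator in \eqref{eq:general-normalized-D} is nonzero, then establish the invariance claims, and finally observe that the numerator vanishes.

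\textbf{Well-definedness.} Every configuration produced in the proof of Theorem~\ref{thm:all-n-intro} consists of pairwise disjoint complete timelike affine lines. For $N=3$ this is Proposition~\ref{prop:admissible}; for $N>3$ it is the disjointness established in Proposition~\ref{prop:null-translation}. Lemma~\ref{lem:reciprocal-distinct} therefore gives $u_{ij}\ne u_{ji}$ for every pair $i<j$. The map $\Phi$ is a bijection, so the lifts $h_{ij}$ and $h_{ji}$ are non-proportional nonzero vectors, and hence $[h_{ij},h_{ji}]\ne0$. The denominator is thus a product of nonzero brackets, and the quotient is defined. This step needs no distinctness among the other roots, which matters because for $N>3$ the roots $u_{ik}$ with $i\le3<k$ all coincide.

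\textbf{Lift independence.} Replace each lift $h_{ij}$ by $t_{ij}h_{ij}$ with $t_{ij}\ne0$. The factor $\ell_{ij}$ is linear in $h_{ij}$, so row $i$ of $C_N$ is multiplied by $\prod_{j\ne i}t_{ij}$, and $\det C_N$ is multiplied by $\prod_{i\ne j}t_{ij}$. Each bracket $[h_{ij},h_{ji}]$ is multiplied by $t_{ij}t_{ji}$, so the denominator also acquires $\prod_{i<j}t_{ij}t_{ji}=\prod_{i\ne j}t_{ij}$. The two factors cancel.

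\textbf{$\operatorname{PGL}(2,\C)$ invariance.} Let a common $G\in\operatorname{GL}(2,\C)$ act on all lifts. Each bracket is multiplied by $\det G$, so the denominator gains $\det(G)^{N(N-1)/2}$. For the numerator, the relation $L_{Ga}(Z)=\det(G)L_a(G^{-1}Z)$ used in Lemma~\ref{lem:bracket} shows that each form $\beta_i^{(N)}$ picks up $\det(G)^{N-1}$ and then undergoes $\operatorname{Sym}^{N-1}(G^{-1})$. Hence $\det C_N$ is multiplied by
\[
\det(G)^{N(N-1)}\det\operatorname{Sym}^{N-1}(G^{-1}).
\]
The standard formula $\det\operatorname{Sym}^{m}(A)=\det(A)^{m(m+1)/2}$, applied with $m=N-1$, gives $\det\operatorname{Sym}^{N-1}(G^{-1})=\det(G)^{-N(N-1)/2}$. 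The net factor on the numerator is therefore $\det(G)^{N(N-1)/2}$, which matches the denominator, and it is unchanged under scalar $G$. An orientation-reversing coordinate change adds simultaneous complex conjugation. That operation maps nonzero quantities to nonzero quantities and zero to zero, so at worst it conjugates the quotient; since the quotient vanishes here, the statement is unaffected. I would state the invariance for the holomorphic $\operatorname{PGL}(2,\C)$ action, as the corollary does.

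\textbf{Vanishing.} By Theorem~\ref{thm:main} for $N=3$, and by Theorem~\ref{thm:all-n-intro} via Lemma~\ref{lem:common-factor} for $N>3$, the forms $\beta_1^{(N)},\dots,\beta_N^{(N)}$ are linearly dependent. Hence $\det C_N=0$, and with a nonzero denominator the quotient is $0$.

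The only step needing a real computation is the exponent bookkeeping in the invariance argument, namely the formula for $\det\operatorname{Sym}^{m}$. I would verify that formula on diagonal matrices $A=\operatorname{diag}(x,y)$: the determinant of $\operatorname{Sym}^{m}(A)$ is $\prod_{k=0}^{m}x^{m-k}y^{k}=(xy)^{m(m+1)/2}$. The identity then extends to all of $\operatorname{GL}(2,\C)$ by density of diagonalizable matrices and multiplicativity.
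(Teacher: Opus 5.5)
Your proof is correct and follows the same route as the paper's. Both arguments use Lemma~\ref{lem:reciprocal-distinct} for the nonzero denominator, the observation that each lift appears once in a row and once in a reciprocal bracket, the count $\det(G)^{N(N-1)}\det\operatorname{Sym}^{N-1}(G^{-1})=\det(G)^{N(N-1)/2}$ against the $N(N-1)/2$ brackets, and Theorem~\ref{thm:all-n-intro} for $\det C_N=0$; your explicit diagonal check of $\det\operatorname{Sym}^m$ and your remark that the coincident roots $u_{ik}$ ($i\le3<k$) are harmless add detail but do not change the argument.
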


\begin{proof}
Lemma~\ref{lem:reciprocal-distinct} makes every denominator factor in
\eqref{eq:general-normalized-D} nonzero.  Each ordered lift $h_{ij}$ occurs exactly once in the product defining
$\beta_i^{(N)}$ and exactly once in the reciprocal bracket
$[h_{ij},h_{ji}]$.  Rescaling that lift therefore multiplies the
corresponding polynomial row and its unique denominator bracket by the same
scalar, so the quotient is independent of the chosen lifts.

Under a common $G\in\operatorname{GL}(2,\C)$, each degree-$(N-1)$ form
acquires the scalar $\det(G)^{N-1}$ and undergoes the common substitution
$\operatorname{Sym}^{N-1}(G^{-1})$.  Factoring the scalar from all $N$ rows
contributes $\det(G)^{N(N-1)}$ to the coefficient determinant.  Moreover,
\[
 \det\operatorname{Sym}^{N-1}(G^{-1})
   =\det(G)^{-N(N-1)/2};
\]
for diagonalizable $G$ this follows by multiplying the $N$ symmetric-power
weights, and the general case follows by density.  The numerator therefore
acquires the net factor $\det(G)^{N(N-1)/2}$.  The denominator contains
$N(N-1)/2$ brackets and acquires the same factor, so the quotient is
projectively invariant.  An anti-M\"obius change is a projective linear
change followed by simultaneous conjugation; it therefore conjugates the
quotient and, in particular, preserves its vanishing.  Finally,
Theorem~\ref{thm:all-n-intro} makes the rows of $C_N$ linearly dependent, so
$\det C_N=0$.
\end{proof}

\begin{corollary}[Dichotomy for complete timelike affine lines]
\label{cor:inertial-dichotomy}
Within the class of pairwise disjoint complete future-directed timelike
affine lines, Atiyah's universal independence assertion holds for $n=2$ and
fails for every $n\ge3$.
\end{corollary}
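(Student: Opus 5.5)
The failure for every $n\ge3$ is already in hand: Theorem~\ref{thm:main} gives the case $n=3$, and Theorem~\ref{thm:all-n-intro} gives every $N\ge3$. Both constructions consist of pairwise disjoint complete future-directed timelike affine lines, with marked events and unique retarded intersections supplied by Lemma~\ref{lem:retarded}. So the only new content is the positive statement at $n=2$. We must show that for any two disjoint complete timelike affine lines $\xi_1,\xi_2$ with marked events $x_1,x_2$, the two linear forms $\beta_1=\ell_{12}$ and $\beta_2=\ell_{21}$ are linearly independent in $W_2=(\C^2)^*$.

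First, Lemma~\ref{lem:retarded} applied to the ordered pairs $(1,2)$ and $(2,1)$ shows that the retarded roots $u_{12},u_{21}\in\mathcal S^-$ exist and are unique. Each is the ray of a nonzero past-null vector. Under the fixed projective identification $\Phi:\mathcal S^-\to\PP^1(\C)$, which is a bijection, they give points $\Phi(u_{12}),\Phi(u_{21})\in\PP^1(\C)$. By \eqref{eq:retarded-forms}, each $\ell_{ij}$ is a nonzero linear form vanishing exactly at $\Phi(u_{ij})$.

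Second, two nonzero linear forms on $\C^2$ are linearly dependent exactly when they are proportional. This happens exactly when they have the same zero in $\PP^1(\C)$. So dependence would force $\Phi(u_{12})=\Phi(u_{21})$, and injectivity of $\Phi$ would then give $u_{12}=u_{21}$. That is excluded by Lemma~\ref{lem:reciprocal-distinct}, whose hypotheses are exactly disjoint complete timelike affine lines. Hence $\beta_1,\beta_2$ are independent. Equivalently, in the bracket notation of Section~\ref{sec:lifts}, the $2\times2$ coefficient determinant equals $\pm[h_{12},h_{21}]\ne0$.

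I expect no serious obstacle here. The one point to handle carefully is that independence does not depend on the choice of lifts or of the common coordinate $\Phi$. This was already noted after \eqref{eq:retarded-forms}: rescaling a lift or applying a common projective (or anti-projective) change preserves linear independence. With that, the corollary is just the combination of the two theorems with the $n=2$ argument above.
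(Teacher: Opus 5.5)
Your proposal is correct and follows essentially the same route as the paper: failure for $n\ge3$ comes from Theorem~\ref{thm:all-n-intro}, and for $n=2$ the two linear forms vanish at $u_{12}\ne u_{21}$ (by Lemma~\ref{lem:reciprocal-distinct}), so they are not proportional and hence are independent. Your added remarks on injectivity of $\Phi$ and the determinant identity $\det=[h_{12},h_{21}]$ are accurate but not needed beyond what the paper states.
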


\begin{proof}
The failure for every $n\ge3$ is Theorem~\ref{thm:all-n-intro}.  For $n=2$,
the two forms are linear and vanish at $u_{12}$ and $u_{21}$, respectively.
Those roots are distinct by Lemma~\ref{lem:reciprocal-distinct}; hence the
two forms are not proportional and are therefore linearly independent.
\end{proof}

\subsection{What the counterexamples do and do not settle}

The counterexamples refute Atiyah's printed universal assertion and its
uniform-motion interpretation \cite[\S1.6]{Atiyah2010}.  The
three-worldline core is nondegenerate in
the following concrete sense: its six ordered roots are pairwise distinct,
its coefficient matrix has rank exactly two, and the displayed real path
crosses the smooth rank-two stratum of the determinant hypersurface
transversely.  For $N>3$, the
stabilization intentionally introduces repeated common roots, which are
permitted by Atiyah's formulation
\cite[Conjecture~1.6.1]{Atiyah2010}.  Whether counterexamples exist for
every $N$ with
all ordered roots pairwise distinct is a stronger genericity problem and is
not settled here.

No simultaneous-observation condition appears in Atiyah's printed
Conjecture~1.6.1 \cite{Atiyah2010}.  The marked events in our
construction cannot be simultaneous in any inertial frame: one has
\[
 \ip{x_2-x_1}{x_2-x_1}=-\frac{1231}{3600}<0,
\]
whereas the difference of two distinct points on a spacelike affine
hyperplane is spacelike.  Requiring synchronized observations would
therefore define a different, stronger problem.  Finally, the examples are
neither static nor formed by inertial rays from a common event.  Atiyah
identified those two specializations with the Euclidean and hyperbolic
branches, respectively; see \cite[\S6]{Atiyah2001} and
\cite[\S1.6]{Atiyah2010}.  Consequently, the
present counterexamples do not address either of those separate conjectures.

\appendix
\section{Proof of Proposition~\ref{prop:crossing}}\label{sec:interval-proof}

This appendix proves Proposition~\ref{prop:crossing}.  Every terminating
decimal below is interpreted as the corresponding rational number.  For
intervals $X=[x_-,x_+]$ and $Y=[y_-,y_+]$, we use the standard rational
operations
\begin{align*}
 X+Y&=[x_-+y_-,x_++y_+],\\
 XY&=[\min S,\max S].
\end{align*}
Here $S=\{x_-y_-,x_-y_+,x_+y_-,x_+y_+\}$.  Division by an interval
$Y$ not containing zero means multiplication by $[1/y_+,1/y_-]$.
All interval endpoints below are rational, and each square-root enclosure
follows from explicit rational inequalities of the form
\[
 L^2<q<U^2.
\]
We prove Proposition~\ref{prop:crossing} in four steps: uniform radical
bounds, endpoint signs, strict monotonicity on $J$, and separation of the
six roots.

\subsection{Step 1: Uniform radical enclosures}

We use the notation $T_-,T_+,J$ fixed in
Section~\ref{sec:intro}.  The derivatives $P_1',P_2',P_3'$ are increasing
affine functions, and
\[
 P_1'(T_+)<0,\qquad P_2'(T_-)>0,\qquad P_3'(T_+)<0.
\]
Hence $P_1$ and $P_3$ are decreasing on $J$, while $P_2$ is increasing.
The following uniform bounds hold:
\begin{align}\label{eq:uniform-radical-bounds}
 \sqrt{11}&\in[3.31662,3.31663],&
 \sqrt{P_1(T)}&\in[13.13216,13.13243],\notag\\
 \sqrt{41}&\in[6.40312,6.40313],&
 \sqrt{P_2(T)}&\in[78.22274,78.22286],\notag\\
 \sqrt{65}&\in[8.06225,8.06226],&
 \sqrt{P_3(T)}&\in[119.95844,119.95962].
\end{align}
The six enclosures are established by the following positive rational
margins:
\[
\begin{array}{c|cc}
q & q_{\min}-L^2 & U^2-q_{\max}\\ \hline
11&0.0000317756&0.0000345569\\
41&0.0000542656&0.0000737969\\
65&0.0001249375&0.0000363076\\
P_1(J)&0.0000557632&0.0002857049\\
P_2(J)&0.0001468924&0.0013700996\\
P_3(J)&0.0002301040&0.0023665444
\end{array}
\]
Here $[L,U]$ is the corresponding interval in
\eqref{eq:uniform-radical-bounds}, while $q_{\min}$ and $q_{\max}$ are the
appropriate monotone endpoint values on $J$.  Positivity of the two displayed
margins gives
\[
 L^2<q_{\min}\le q\le q_{\max}<U^2,
\]
and hence proves each enclosure in \eqref{eq:uniform-radical-bounds}.

\subsection{Step 2: The endpoint signs}

At the two endpoints $T_-$ and $T_+$, the following sharper rational enclosures hold:
\[
\begin{array}{c|cc}
 &T=T_-&T=T_+\\ \hline
\sqrt{P_1}&[13.132419,13.132420]&[13.132162,13.132163]\\
\sqrt{P_2}&[78.222740,78.222741]&[78.222851,78.222852]\\
\sqrt{P_3}&[119.959610,119.959611]&[119.958440,119.958441]
\end{array}
\]
and
\begin{align*}
 \sqrt{11}&\in[3.316624,3.316625],&
 \sqrt{41}&\in[6.403124,6.403125],\\
 \sqrt{65}&\in[8.062257,8.062258].
\end{align*}
Each enclosure follows from the strict rational inequalities
$L^2<q<U^2$, with $q=P_\nu(T_\pm)$ or $q\in\{11,41,65\}$, as appropriate.

Let
\[
 a=\sqrt{11},\qquad b=\sqrt{41},\qquad c=\sqrt{65},
 \qquad s_\nu(T)=\sqrt{P_\nu(T)}.
\]
The six brackets in \eqref{eq:Delta}, formed from the numerator lifts
$H_{ij}$, are
\begin{align*}
 B_1={}&(57+25a)(s_2+600T-281)
        -(115+5a)(s_2+351-360T),\\
 B_2={}&(s_1+33-24T)(s_3+125-240T)\\
       &\quad -(s_1+40T-23)(576T+5s_3-1077),\\
 B_3={}&-(5+b)+4(c-8),\\
 B_4={}&-(57+25a)-(115+5a)(c-8),\\
 B_5={}&-4(s_1+33-24T)-(5+b)(s_1+40T-23),\\
 B_6={}&(s_2+351-360T)(s_3+125-240T)\\
       &\quad -(s_2+600T-281)(576T+5s_3-1077).
\end{align*}
Substitution of the preceding rational enclosures into
$B_1,\ldots,B_6$ yields
\[
\begin{array}{c|cc}
 &T=T_-&T=T_+\\ \hline
B_1 &[24079.175326,24079.183624]&[24080.489444,24080.497742]\\
B_2 &[1189.410506,1189.410696]&[1189.296098,1189.296288]\\
B_3 &[-11.154097,-11.154092]&[-11.154097,-11.154092]\\
B_4 &[-148.107728,-148.107570]&[-148.107728,-148.107570]\\
B_5 &[-370.732120,-370.732080]&[-370.731762,-370.731723]\\
B_6 &[5817.982138,5817.983790]&[5817.378762,5817.380415]
\end{array}
\]
The two triple products themselves satisfy
\[
\begin{array}{c|cc}
 &B_1B_2B_3&B_4B_5B_6\\ \hline
T=T_-&[-319453768,-319453463]&[319455087,319455551]\\
T=T_+&[-319440473,-319440168]&[319421649,319422113].
\end{array}
\]
Since $\Delta_H=B_1B_2B_3+B_4B_5B_6$, adding the two intervals in each row
gives
\begin{align}\label{eq:endpoint-signs-J}
 \Delta_H(T_-)&\in[1319,2088]\subset(0,\infty),\\
 \Delta_H(T_+)&\in[-18824,-18055]\subset(-\infty,0).
\end{align}

\subsection{Step 3: Strict monotonicity on the isolating interval}

Write $s_\nu=\sqrt{P_\nu}$, so
$s_\nu'=P_\nu'/(2s_\nu)$.  From
\eqref{eq:uniform-radical-bounds} one obtains, throughout $J$,
\begin{align*}
 s_1'&\in[-25.701,-25.699],&
 s_2'&\in[11.014,11.047],\\
 s_3'&\in[-116.920,-116.916].
\end{align*}
Differentiating the three moving lifts gives
\begin{align*}
 H_{13}'&=(s_1'-24,s_1'+40),\\
 H_{23}'&=(s_2'-360,s_2'+600),\\
 H_{32}'&=(576+5s_3',s_3'-240).
\end{align*}
The constant lifts have zero derivative.  Hence
\begin{align*}
 B_1'&=[H_{12},H_{23}'],\\
 B_2'&=[H_{13}',H_{32}]+[H_{13},H_{32}'],\\
 B_5'&=[H_{13}',H_{21}],\\
 B_6'&=[H_{23}',H_{32}]+[H_{23},H_{32}']
\end{align*}
and $B_3'=B_4'=0$.  The uniform radical bounds
\eqref{eq:uniform-radical-bounds} give, throughout $J$,
\[
\begin{array}{c|c@{\qquad}c|c}
B_1&[24079,24081]&B_1'&[131406,131416]\\
B_2&[1189,1190]&B_2'&[-11449,-11447]\\
B_3&[-11.155,-11.154]&B_5'&[35,36]\\
B_4&[-148.109,-148.106]&B_6'&[-60459.78,-60442.07]\\
B_5&[-370.738,-370.726]&&\\
B_6&[5815,5821]&&
\end{array}
\]
Since $B_3'=B_4'=0$, differentiation gives
\[
 \Delta_H'
 =B_1'B_2B_3+B_1B_2'B_3+B_4B_5'B_6+B_4B_5B_6'.
\]
The displayed bounds give
\begin{align*}
 B_1'B_2B_3&\in[-1744475122,-1742720301],\\
 B_1B_2'B_3&\in[3074402819,3075471082],\\
 B_4B_5'B_6&\in[-31037130,-30143273],\\
 B_4B_5B_6'&\in[-3319824419,-3318677322].
\end{align*}
Adding these four rational intervals gives
\begin{equation}\label{eq:derivative-bound}
 \Delta_H'(T)\in
 [-2020933852,-2016069814]\subset(-\infty,0),
 \qquad T\in J.
\end{equation}

\subsection{Step 4: Separation of the six roots}

Applying the uniform bounds \eqref{eq:uniform-radical-bounds} to the
fifteen pairwise brackets yields the following strict estimates.  The row and
column labels denote the ordered
pairs indexing the lifts; for example, the entry in row $12$, column $13$
means $[H_{12},H_{13}]<-188$ throughout $J$.
\[
\begin{array}{c|rrrrr}
 &13&21&23&31&32\\ \hline
12&<-188&<-2060&>24079&<-148&>6340\\
13&&<-370&>4689&<-27&>1189\\
21&&&>3885&<-11&>525\\
23&&&&<-148&>5815\\
31&&&&&>3
\end{array}
\]
Every bracket is therefore nonzero on $J$.  The endpoint signs
\eqref{eq:endpoint-signs-J}, the derivative bound
\eqref{eq:derivative-bound}, and the fifteen nonvanishing estimates
complete the proof of Proposition~\ref{prop:crossing}.

\end{document}